\definecolor{dbrown}{RGB}{102,51,0}
\definecolor{darkblue}{RGB}{80, 100, 150}
\newtheorem{theorem}{Theorem}
\numberwithin{theorem}{section}
\newtheorem{corollary}[theorem]{Corollary}
\newcommand{\ie}{{\it i.e.},\ }
\begin{document}

\title{Inhomogeneous perturbation and error bounds for the stationary performance of random walks in the quarter plane}
\author{Xinwei Bai \and
		Jasper Goseling}
\date{}
\maketitle

\begin{abstract}
A continuous-time random walk in the quarter plane with homogeneous transition rates is considered. Given a non-negative reward function on the state space, we are interested in the expected stationary performance. Since a direct derivation of the stationary probability distribution is not available in general, the performance is approximated by a perturbed random walk, whose transition rates on the boundaries are changed such that its stationary probability distribution is known in closed form. 

A perturbed random walk for which the stationary distribution is a sum of geometric terms is considered and the perturbed transition rates are allowed to be inhomogeneous. It is demonstrated that such rates can be constructed for any sum of geometric terms that satisfies the balance equations in the interior of the state space. The inhomogeneous transitions relax the pairwise-coupled structure on these geometric terms that would be imposed if only homogeneous transitions are used.

An explicit expression for the approximation error bound is obtained using the Markov reward approach, which does not depend on the values of the inhomogeneous rates but only on the parameters of the geometric terms. Numerical experiments indicate that inhomogeneous perturbation can give smaller error bounds than homogeneous perturbation. 

\textbf{Keywords}: Random walk, quarter plane, inhomogeneous perturbation, error bound, Markov reward approach
\end{abstract}

\section{Introduction}
\label{sec:introduction}


In this paper, a continuous-time random walk $R$ in the two-dimensional non-negative orthant is considered, \ie the state space $S = \{ 0,1,2,\dots \}^2$. The transition rates of $R$ are homogeneous, which means that they are translation invariant within the interior, the horizontal axis and the vertical axis of the state space. The stationary performance of $R$ is studied in this paper. More precisely, given a non-negative reward function on the state space, $F: S \rightarrow [0,\infty)$, we are interested in the expected stationary reward given by
\begin{eqnarray*}
\mathcal{F} = \sum_{(n_1,n_2)\in S} \pi(n_1,n_2) F(n_1,n_2),
\end{eqnarray*}
where $\pi$ is the stationary probability distribution of $R$. In line with the work in~\cite{chen2015invariant},~\cite{goseling2016linear} and~\cite{vandijk11inbook}, upper and lower bounds on $\mathcal{F}$ are considered, which are obtained by considering a perturbed random walk $\bar{R}$ and its expected stationary reward,
\begin{eqnarray*}
\bar{\mathcal{F}} = \sum_{(n_1,n_2)\in S} \bar{\pi}(n_1,n_2) F(n_1,n_2).
\end{eqnarray*}
The transition rates of $\bar{R}$ are different from those of $R$ in such a way that $\bar{\pi}$ is known in closed form. As a result, upper and lower bounds on $\mathcal{F}$ can be established in terms of $\bar{\pi}$. 

The main difference from the paper mentioned above is that for the perturbed random walk, we consider inhomogeneous perturbation. More precisely, the transition rates of $\bar{R}$ are different at every state on the horizontal and vertical axes, which are constructed such that a given $\bar{\pi}$ satisfies all the balance equations. In this paper, $\bar{\pi}$ is considered to have the following form,
\begin{eqnarray*}
\bar{\pi}(n_1,n_2) = \sum_{k=1}^K c_k \rho_k^{n_1} \sigma_k^{n_2}, \qquad \forall\ (n_1,n_2)\in S,
\end{eqnarray*}
where $(\rho_k,\sigma_k) \in (0,1)^2$ and $c_k\in \mathbb{R}$. 

The approximation framework that is developed in this paper builds on the Markov reward approach, which is developed by van Dijk. An overview of this method is given in~\cite{vandijk11inbook}. In the main result of the approach, the bound on $| \bar{\mathcal{F}} - \mathcal{F} |$, which will be referred to as the error bound throughout the paper, depends on $\bar{\pi}$, the difference between the transition rates of $R$ and $\bar{R}$, and the so-called bias terms, whose definition will be given later. In most cases, no closed-form expressions are known for the bias terms. Therefore, the general approach in~\cite{vandijk1988simple},~\cite{vandijk1998bounds} and~\cite{vandijk1988perturbation} has been developed to establish bounds on the bias terms. These bounds are then used to establish the error bound. In this paper, the focus is not on finding such bounds on the bias terms. Instead, we assume that affine bounds on the bias terms are given. Based on these bounds, the error bound is derived for inhomogeneous perturbation. 

In particular, in~\cite{goseling2016linear} a general methodology has been proposed that provides affine bounds on the bias terms by means of linear programming. This approach not only gets rid of the complicated manual derivation, but also finds tighter bounds on the bias terms than the general approach. Hence, the error bound result has been improved. Moreover, the given $\bar{\pi}$ has a product form, \ie
\begin{eqnarray*}
\bar{\pi}(n_1,n_2) = (1-\rho)(1-\sigma) \rho^{n_1}\sigma^{n_2}, \qquad \forall\ (n_1,n_2)\in S,
\end{eqnarray*}
where $(\rho,\sigma) \in (0,1)^2$. Considering $\bar{\pi}$ of a product form often leads to relatively large difference between the transition rates of $R$ and $\bar{R}$. Taking that into account, the linear programming approach and homogeneous perturbation have been applied for discrete-time random walks in~\cite{chen2015invariant}, where  $\bar{\pi}(n)$ is a sum of geometric terms, \ie 
\begin{eqnarray*}
\bar{\pi}(n_1,n_2) = \sum_{k=1}^K c_k \rho_k^{n_1} \sigma_k^{n_2}, \qquad \forall\ (n_1,n_2)\in S.
\end{eqnarray*}
It is shown by numerical experiments that the error bound provided by considering the sum of geometric terms is better than that obtained using the product-form distribution (a single term). The reason is that with more geometric terms, the difference between transition rates of $R$ and $\bar{R}$ becomes smaller. 

In~\cite{chen2015invariant} and~\cite{goseling2016linear}, homogeneous perturbation is considered, which imposes constraints on $\bar{R}$ and $\bar{\pi}$. Necessary conditions on the structure of the geometric terms are given in~\cite{chen2013necessary}. More precisely, the transition rates in the interior of the state space, on the horizontal axis and on the vertical axis each determine an algebraic curve in $\mathbb{R}^2$, which is given by
{\small
\begin{align*}
Q &= \left\{ (\rho,\sigma) \in (0,1)^2 \mid \sum_{i=-1}^1\sum_{j=-1}^1\rho^{-i}\sigma^{-j}q_{i,j} = 0 \right\}, \\
H &= \left\{ (\rho,\sigma) \in (0,1)^2 \mid \sum_{i=-1}^1\rho^{-i}\sigma q_{i,-1} + \rho^{-1} h_{1,0} + \rho h_{-1,0} = \sum_{i=-1}^1 h_{i,1} + h_{-1,0} + h_{1,0} \right\}, \\
V &= \left\{ (\rho,\sigma) \in (0,1)^2 \mid \sum_{j=-1}^1\rho\sigma^{-j} q_{-1,j} + \sigma^{-1} v_{0,1} + \sigma v_{0,-1} = \sum_{j=-1}^1 v_{1,j} + v_{0,-1} + v_{0,1} \right\}. 
\end{align*}}
Every pair $(\rho_k,\sigma_k)$ in the sum has to be located on the curve induced by the interior balance equation. Besides, a pairwise-coupled structure has to be satisfied by $(\rho_1,\sigma_1),\dots,(\rho_K,\sigma_K)$, which means that for any two adjacent pairs $(\rho_k, \sigma_k)$ and $(\rho_{k+1}, \sigma_{k+1})$, it holds that $\rho_k = \rho_{k+1}$ or $\sigma_k = \sigma_{k+1}$. One of the contributions of the paper is to show that by allowing for inhomogeneous perturbation, these constraints are no longer necessary. 

Adan, Wessels and Zijm have developed a compensation approach in~\cite{adan1993compensation} to construct a sum of infinitely many geometric terms as the stationary probability distribution for continuous-time random walks. However, to ensure the convergence of the sum, it is required that the random walk has no transitions to the east, the north or the northeast, which puts a limitation on the models that can be considered. In this paper, we are going to consider random walks with transitions to the east, north, or northeast, for which the compensation approach can not be applied. Moreover, in our model, $(\rho_1,\sigma_1),\dots, (\rho_K,\sigma_K)$ don't necessarily follow a pairwise-coupled structure. In other words, with our framework we generalize the range of random walks whose stationary performance can be approximated. 

Works have been considered and various approximation frameworks have been applied on the same model. In~\cite{fayolle1999random}, boundary value problems are formulated for the generating functions of the stationary probability distributions. However, the problem can be solved only in a limited number of cases. Miyazawa considered the tail asymptotics of the stationary probability distributions for two-dimensional reflecting processes in~\cite{miyazawa2011light}, where an overview on the approaches to tackle the tail problem is given and their applications are discussed. Perturbation analysis was also considered in, for example,~\cite{altman2004perturbation},~\cite{haviv1984perturbation},~\cite{heidergott2010series} and~\cite{heidergott2007series}. Perturbation on quasi birth-and-death processes was discussed in~\cite{altman2004perturbation}, where an explicit expression for the stationary probability distribution of the perturbed process was given in terms of the perturbation parameter. However, no result on the error bound was given there. In~\cite{heidergott2010series} and~\cite{heidergott2007series}, the stationary probability distribution of the perturbed process was expressed in terms of the generator and the deviation matrix for discrete-time and continuous-time Markov processes respectively. Moreover, the condition for the existence of the deviation matrix is given there. Similar perturbation was considered and error bounds were given in~\cite{haviv1984perturbation}. Compared to the previous works, the main contributions of this paper are as follows. 

\begin{itemize}
\item It is shown that for any given $\bar{\pi}$ that is a sum of geometric terms, with the parameters located on the curve induced by the interior balance equation, show that inhomogeneous transition rates on the horizontal and vertical axes of $\bar{R}$ can be constructed such that $\bar{\pi}$ is the stationary probability distribution of $\bar{R}$. 

\item Based on the construction for the transition rates of $\bar{R}$, an explicit expression is given for the error bound. This expression depends only on the given parameters of $\bar{\pi}$, $R$ and $\bar{R}$, instead of the values of the inhomogeneous transition rates, which means that detailed calculation for the inhomogeneous rates is not necessary. 

\item Numerical results indicate that the error bound gets improved compared with the one obtained by homogeneous perturbation. The reason is that by allowing inhomogeneous perturbation, the difference between transition rates of $R$ and $\bar{R}$ becomes smaller.
\end{itemize}

The rest of the paper is structured as follows. In Section~\ref{sec:model_and_problem_formulation}, the random walk model and the problems to be solved are given. Next, the inhomogeneous perturbation framework is presented in Section~\ref{sec:inhomo_perturbed}, where the transition rates of the perturbed random walk are constructed. Moreover, the feasibility of our proposed approach for constructing the transition rates is shown. Then, in Section~\ref{sec:error_bound}, the error bound result is derived. Finally, numerical experiments are considered in Section~\ref{sec:numerical_experiment}, whose results suggest that our inhomogeneous perturbation framework gives smaller error bound than homogeneous perturbation.

\section{Model and problem formulation}
\label{sec:model_and_problem_formulation}

Consider a continuous-time random walk $R$, on the two-dimensional non-negative orthant. The state space is $S = \left\{ 0,1,2,\dots \right\}^2$. 
A state is represented by a two-dimensional vector $(n_1,n_2)$. Moreover, only transitions to the nearest neighbors of a state are allowed in the model. Let $N(n_1,n_2)$ be the set of all possible nonzero transitions from $(n_1,n_2)$, \ie 
\begin{eqnarray*}
N(n_1,n_2) = \left\{ (i,j)\in \{-1,0,1\}^2 \mid (i,j)\neq (0,0), (n_1+i,n_2+j) \in S \right\}. 
\end{eqnarray*}
Partition the state space into four disjoint components, namely the horizontal axis $S_1 = \{ 1,2,\dots \} \times \{ 0 \}$, the vertical axis $S_2 = \{ 0 \} \times \{ 1,2,\dots \}$, the origin $S_3 = \{ 0 \} \times \{ 0 \}$, and the interior $S_4 = \{ 1,2,\dots \} \times \{ 1,2,\dots \}$. Then, all the states in a component have the same set of possible transitions. For simplicity of notation, define the possible transitions in all the components as $N_1 = \{\{-1,0,1\}\times\{0,1\}\}\backslash \{(0,0)\}$, $N_2 = \{\{0,1\}\times\{-1,0,1\}\}\backslash \{(0,0)\}$, $N_3 = \{\{0,1\}\times\{0,1\}\}\backslash \{(0,0)\}$ and $N_4 = \{\{-1,0,1\}\times\{-1,0,1\}\}\backslash \{(0,0)\}$. Let $q_{i,j}(n_1,n_2)$ denote the transition rate from $(n_1,n_2)$ to $(n_1+i,n_2+j)$, for which $(i,j) \in N(n_1,n_2)$, and define
\begin{eqnarray*}
q_{0,0}(n_1,n_2) = - \sum_{(i,j) \in N(n_1,n_2)} q_{i,j}(n_1,n_2). 
\end{eqnarray*}
In the paper, we consider uniformizable random walks, \ie there exists $0 < \gamma < \infty$, for which 
\begin{eqnarray*}
\sum_{(i,j) \in N(n_1,n_2)} q_{i,j}(n_1,n_2) \le \gamma, \qquad \forall (n_1,n_2) \in S. 
\end{eqnarray*}
Homogeneous transition rates in each component are considered, which means that they are independent of the states within the component. Hence, let
\begin{eqnarray*}
q_{i,j}(n_1,n_2) = \left\{
\begin{array}{l@{\qquad }l}
h_{i,j}, & \textrm{if } (n_1,n_2) \in S_1, \\
v_{i,j}, & \textrm{if } (n_1,n_2) \in S_2, \\
r_{i,j}, & \textrm{if } (n_1,n_2) \in S_3, \\
q_{i,j}, & \textrm{if } (n_1,n_2) \in S_4. 
\end{array}
\right.
\end{eqnarray*}
To give an intuition on the model, the components and possible transitions are shown in Figure~\ref{fig:model_2d}. 

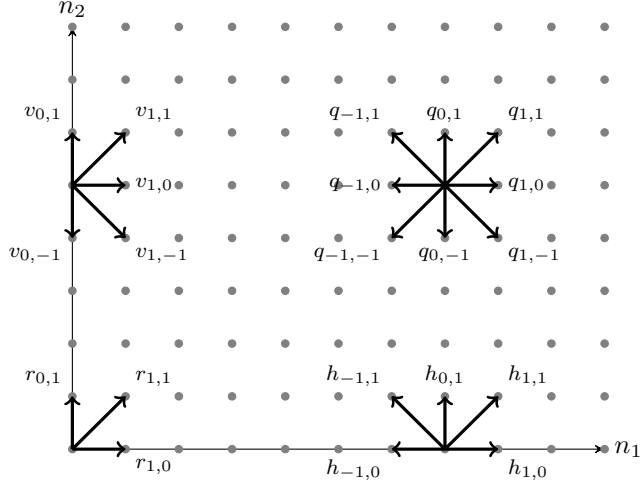
\begin{figure}[htb!]
\centering
\begin{tikzpicture}[scale = 0.7]
\draw [->, >=stealth'] (0,0) -- (0,8) node[above, thick] {$n_2$};
\draw [->, >=stealth'] (0,0) -- (10,0) node[right, thick] {$n_1$};

\foreach \i in {0,...,10}
\foreach \j in {0,...,8}
\filldraw [gray] (\i, \j) circle (2pt);
	
\draw [->, very thick] (7,5) -- ++(-1,-1) node[below left, thick] {\footnotesize $q_{-1,-1}$};
\draw [->, very thick] (7,5) -- ++(-1,0) node[left, thick] {\footnotesize $q_{-1,0}$};
\draw [->, very thick] (7,5) -- ++(-1,1) node[above left, thick] {\footnotesize $q_{-1,1}$};
\draw [->, very thick] (7,5) -- ++(0,-1) node[below, thick] {\footnotesize $q_{0,-1}$};
\draw [->, very thick] (7,5) -- ++(0,1) node[above, thick] {\footnotesize $q_{0,1}$}; 	
\draw [->, very thick] (7,5) -- ++(1,-1) node[below right, thick] {\footnotesize $q_{1,-1}$};
\draw [->, very thick] (7,5) -- ++(1,0) node[right, thick] {\footnotesize $q_{1,0}$};
\draw [->, very thick] (7,5) -- ++(1,1) node[above right, thick] {\footnotesize $q_{1,1}$}; 
	
\draw [->, very thick] (7,0) -- ++(0,1) node[above, thick] {\footnotesize $h_{0,1}$};
\draw [->, very thick] (7,0) -- ++(-1,1) node[above left, thick] {\footnotesize $h_{-1,1}$};
\draw [->, very thick] (7,0) -- ++(1,1) node[above right, thick] {\footnotesize $h_{1,1}$};
\draw [->, very thick] (7,0) -- ++(-1,0) node[below left, thick] {\footnotesize $h_{-1,0}$};
\draw [->, very thick] (7,0) -- ++(1,0) node[below right, thick] {\footnotesize $h_{1,0}$};
	
\draw [->, very thick] (0,5) -- ++(1,0) node[right, thick] {\footnotesize $v_{1,0}$};
\draw [->, very thick] (0,5) -- ++(1,-1) node[below right, thick] {\footnotesize $v_{1,-1}$};
\draw [->, very thick] (0,5) -- ++(1,1) node[above right, thick] {\footnotesize $v_{1,1}$};
\draw [->, very thick] (0,5) -- ++(0,1) node[above left, thick] {\footnotesize $v_{0,1}$};
\draw [->, very thick] (0,5) -- ++(0,-1) node[below left, thick] {\footnotesize $v_{0,-1}$};
	
\draw [->, very thick] (0,0) -- ++(1,0) node[below right, thick] {\footnotesize $r_{1,0}$};
\draw [->, very thick] (0,0) -- ++(0,1) node[above left, thick] {\footnotesize $r_{0,1}$};
\draw [->, very thick] (0,0) -- ++(1,1) node[above right, thick] {\footnotesize $r_{1,1}$};
\end{tikzpicture}
\caption{Components and possible transitions of $R$}
\label {fig:model_2d}
\end{figure}

Assume that $R$ is irreducible, aperiodic and positive recurrent. Then, let $\pi: S \rightarrow [0,1]$ be the stationary probability distribution of $R$, which means that $\pi$ satisfies that for any $(n_1,n_2) \in S$, 
\begin{eqnarray*}
\sum_{(i,j)\in N(n_1,n_2)} \pi(n_1+i,n_2+j)q_{-i,-j}(n_1+i,n_2+j) + \pi(n_1,n_2)q_{0,0}(n_1,n_2) = 0. 
\end{eqnarray*}
We remark that all the distributions mentioned throughout the paper are probability distributions. 

Consider a non-negative reward function on the state space, \ie $F: S \rightarrow [0,\infty)$. The main goal of the paper is to bound the following expected stationary performance, 
\begin{eqnarray}
\mathcal{F} = \sum_{(n_1,n_2)\in S} \pi(n_1,n_2) F(n_1,n_2). 
\end{eqnarray}

The approximation is obtained by considering the same reward function on a perturbed random walk $\bar{R}$, which is defined on the same partition of the state space, and the same set of possible transitions within each component. Denote by $\bar{\pi}$ the stationary probability distribution of $\bar{R}$. Then the expected stationary reward of $\bar{R}$ is given by 
\begin{eqnarray*}
\bar{\mathcal{F}} = \sum_{(n_1,n_2)\in S} \bar{\pi}(n_1,n_2) F(n_1,n_2).
\end{eqnarray*}
In this paper, $\bar{\pi}$ is considered to have the following form, 
\begin{eqnarray*}
\bar{\pi}(n_1,n_2) = \sum_{k=1}^{K} c_k \rho_k^{n_1}\sigma_k^{n_2}, \qquad \forall (n_1,n_2) \in S,
\end{eqnarray*}
with $(\rho_k,\sigma_k) \in (0,1)^2$ and $c_k \in \mathbb{R}$. This form will be referred to as a sum of geometric terms further in the remainder. In addition, $(\rho_k,\sigma_k)$ is chosen such that the single-term measure $\rho_k^{n_1}\sigma_k^{n_2}$ satisfies the interior balance equation. Define $\Gamma = \{ (\rho_1,\sigma_1),\dots,(\rho_K,\sigma_K) \}$. Moreover, let $\mathrm{Li}_s(z)$ denote the polylogarithm function, defined as
\begin{eqnarray*}
\mathrm{Li}_s(z) = \sum_{k=1}^\infty \frac{z^k}{k^s}. 
\end{eqnarray*}
In this paper, the following two problems will be discussed. 
\begin{enumerate}
\item For the given $\bar{\pi}$ that is a sum of geometric terms, inhomogeneous transition rates on the axes are constructed such that $\bar{\pi}$ is indeed the stationary probability of $\bar{R}$. It is shown that the rates on the axes are uniformly bounded for all states. Moreover, the rates are convergent as the boundary state goes to infinity. 
\item An explicit expression for the bound on $| \bar{\mathcal{F}} - \mathcal{F} |$ will be given. 
\end{enumerate}
These two problems will be tackled separately in the next two sections.

\section{Inhomogeneous perturbed random walk}
\label{sec:inhomo_perturbed}

\subsection{Construction of the inhomogeneous transition rates}

In this section, the transition rates of $\bar{R}$ will be constructed step by step. Denote by $\bar{q}_{i,j}(n_1,n_2)$, $\bar{h}_{i,j}(n_1,0)$, $\bar{v}(0,n_2)$ and $\bar{r}_{i,j}$ the transition rates in the interior, on the horizontal axis, the vertical axis and at the origin respectively. First, let the interior transition rates of $\bar{R}$ be equal to those of R. Second, on the horizontal and vertical axes, fix the rates of the transitions going from the axes to the interior. Moreover, for the transition rates along the axes, one of them is chosen to be constant for all the states, and the other depends on the state. The discussion above is reflected in the following theorem. 

\begin{theorem}
\label{thm:perturb_transition_rates}
Suppose that $\bar{\pi}$ is a sum of geometric terms satisfying the interior balance equation. Let $\bar{q}_{i,j}(n_1,n_2) = q_{i,j}, \forall (n_1,n_2) \in S_4, (i,j) \in N_4$. Moreover, on the horizontal and vertical axes let 
\begin{eqnarray*}
\bar{h}_{i,1}(n_1,0) = q_{i,1}, \quad \bar{v}_{i,j}(0,n_2) = q_{1,j}, \quad \forall i,j=-1,0,1,\ n_1,n_2 = 1,2,\dots. 
\end{eqnarray*}
Choose constants $\bar{h}_{1,0}, \bar{v}_{0,1} \ge 0$ and fix $\bar{r}_{1,0} = \bar{h}_{1,0}$, $\bar{r}_{0,1} = \bar{v}_{0,1}$, $\bar{r}_{1,1} = q_{1,1}$ at the origin and let
\begin{eqnarray*}
\bar{h}_{1,0}(n_1,0) = \bar{h}_{1,0}, \qquad \bar{v}_{0,1}(0,n_2) = \bar{v}_{0,1}, \qquad \forall n_1,n_2 = 1,2,\dots. 
\end{eqnarray*}
In addition, the homogeneous rates are 
\begin{align*}
\bar{h}_{-1,0}(n_1,0) &= \frac{\sum_{k=1}^K c_k\rho_k^{n_1-1}}{\sum_{k=1}^K c_k\rho_k^{n_1}}\bar{h}_{1,0} - \frac{\sum_{k=1}^K c_k\alpha_k(1-\rho_k)^{-1}\rho_k^{n_1}}{\sum_{k=1}^K c_k\rho_k^{n_1}}, \\
\bar{v}_{0,-1}(0,n_2) &= \frac{\sum_{k=1}^K c_k\sigma_k^{n_2-1}}{\sum_{k=1}^K c_k\sigma_k^{n_1}}\bar{v}_{0,1} - \frac{\sum_{k=1}^K c_k\beta_k(1-\sigma_k)^{-1}\sigma_k^{n_2}}{\sum_{k=1}^K c_k\sigma_k^{n_2}},
\end{align*}
for $n_1,n_2=1,2,\dots$, where
\begin{align*}
\alpha_k = \sum_{i=-1}^1 q_{i,1} - \sum_{i=-1}^1 \rho_k^{-i}\sigma_k q_{i,-1}, \qquad \beta_k = \sum_{j=-1}^1 q_{1,j} - \sum_{j=-1}^1 \rho_k \sigma_k^{-j} q_{-1,j}.
\end{align*}
Then the given $\bar{\pi}$ is the stationary probability distribution of $\bar{R}$. 
\end{theorem}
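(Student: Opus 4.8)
The plan is to verify directly that the constructed rates make $\bar\pi$ satisfy the global balance equation at every state, handling the interior $S_4$, the two axes $S_1,S_2$, and the origin $S_3$ in turn. Throughout I treat each geometric term $\rho_k^{n_1}\sigma_k^{n_2}$ as a formal expression valid for all integer exponents, so that the interior balance equation may be evaluated even at states of row or column $0$.

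For the interior, observe that every transition of $\bar R$ terminating in $S_4$ carries the interior rate: this is immediate for transitions originating in $S_4$, and it holds for transitions entering $S_4$ from the axes or the origin because $\bar h_{i,1}=q_{i,1}$, $\bar v_{1,j}=q_{1,j}$ and $\bar r_{1,1}=q_{1,1}$. Hence the balance equation at any $(n_1,n_2)\in S_4$ coincides with that of $R$. Since each $(\rho_k,\sigma_k)\in\Gamma$ satisfies the interior balance equation and that equation is linear in the measure, the sum $\bar\pi$ satisfies it as well.

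The main step is the horizontal axis. For $(n_1,0)\in S_1$ I would write out the balance equation and split it into a horizontal part, involving $(n_1\pm 1,0)$, and a vertical part, involving row $1$. Because all rates feeding the axis from above are the interior rates $q_{i,-1}$ and all rates leaving the axis upward are $q_{i,1}$, the net vertical outflow from $(n_1,0)$ evaluates in closed form to $A(n_1):=\sum_{k}c_k\alpha_k\rho_k^{n_1}$, with $\alpha_k$ exactly the quantity in the statement. Writing the rightward flux as $\bar\pi(n_1,0)\bar h_{1,0}$ and the leftward flux as $\Phi(n_1):=\bar\pi(n_1,0)\bar h_{-1,0}(n_1,0)$, the balance equation reduces to the flux relation
\[
\left[\bar\pi(n_1-1,0)-\bar\pi(n_1,0)\right]\bar h_{1,0}+\Phi(n_1+1)-\Phi(n_1)=A(n_1).
\]
The claimed formula for $\bar h_{-1,0}$ is equivalent to $\Phi(n_1)=\bar\pi(n_1-1,0)\bar h_{1,0}-B(n_1)$ with $B(n_1):=\sum_{k}c_k\alpha_k(1-\rho_k)^{-1}\rho_k^{n_1}$; substituting it collapses the left-hand side to the telescoping difference $B(n_1)-B(n_1+1)$, which equals $A(n_1)$ term by term because $(1-\rho_k)^{-1}\rho_k^{n_1}(1-\rho_k)=\rho_k^{n_1}$. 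This identity is precisely what forces the factor $(1-\rho_k)^{-1}$.

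It remains to confirm the derivation is uniform down to $n_1=1$ and to close out the remaining components. At $n_1=1$ the only new feature is that the above-neighbour $(0,1)$ lies on $S_2$, but the construction fixes its right-and-down rate to $\bar v_{1,-1}=q_{1,-1}$, so $A(1)$ is unchanged, while the inflow from the origin uses $\bar r_{1,0}=\bar h_{1,0}$, matching $\bar\pi(0,0)\bar h_{1,0}$. The vertical axis $S_2$ follows by the identical argument under the symmetry interchanging $(n_1,\rho_k,h)$ with $(n_2,\sigma_k,v)$, giving the stated $\bar v_{0,-1}$ formula with $\beta_k$. The origin can then be handled either by direct substitution of $\bar r_{1,0},\bar r_{0,1},\bar r_{1,1}$, or more economically by noting that the global balance equations sum to zero identically, so that validity on $S_1\cup S_2\cup S_4$ forces validity at $S_3$; the summation is legitimate since $\bar\pi$ is summable and the constructed rates are bounded. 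I expect the main obstacle to be organizational rather than conceptual: isolating the vertical defect as the single coefficient sum $A(n_1)$ and verifying that the fixed boundary-to-interior rates keep every relevant flux equal to its interior value near the origin, after which the telescoping makes the roles of $\alpha_k$, $\beta_k$ and $(1-\rho_k)^{-1}$ transparent.
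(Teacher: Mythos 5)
Your proof is correct and follows essentially the same route as the paper's: make the interior and axis-to-interior rates equal to those of $R$ so that interior balance is immediate, isolate the net vertical outflow at $(n_1,0)$ as $\sum_k c_k\alpha_k\rho_k^{n_1}$, and observe that the constructed $\bar h_{-1,0}(n_1,0)$ makes the horizontal fluxes telescope, with $(1-\rho_k)^{-1}$ cancelling against $(1-\rho_k)$, the vertical axis and origin being handled symmetrically. Your extra care at $n_1=1$ and the alternative ``balance equations sum to zero'' argument for the origin are refinements of detail rather than a different method.
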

\begin{proof}
To show that $\bar{\pi}$ is the stationary probability distribution, it suffices to verify all the balance equations. As is specified, the transition rates in the interior are equal to those of $R$. Moreover, the transition rates from axes to interior are equal to those rates of R as well. Hence, the balance equation in the interior can be easily verified. Then, the balance equation on the horizontal axis is specified below, 
\begin{multline*}
\sum_{i=-1}^1 \bar{\pi}(n_1-i,1) q_{i,-1} + \bar{\pi}(n_1-1,0) \bar{h}_{1,0}(n_1-1,0) + \bar{\pi}(n_1+1,0) \bar{h}_{-1,0}(n_1+1,0) \\
= \bar{\pi}(n_1,0) \left( \sum_{i=-1}^1 \bar{h}_{i,1}(n_1,0) + \bar{h}_{-1,0}(n_1,0) + \bar{h}_{1,0}(n_1,0) \right), 
\end{multline*}
In the following part, we only check the balance equation on the horizontal axis, since the vertical equation can be checked in the same way. Rewriting the balance equation gives that, 
{\footnotesize
\begin{multline*}
\bar{\pi}(n_1-1,0) \bar{h}_{1,0}(n_1-1,0) + \bar{\pi}(n_1+1,0) \bar{h}_{-1,0}(n_1+1,0)
-\ \bar{\pi}(n_1,0) \left( \bar{h}_{-1,0}(n_1,0) + \bar{h}_{1,0}(n_1,0) \right) \\
= \bar{\pi}(n_1,0) \sum_{i=-1}^1 \bar{h}_{i,1}(n_1,0) - \sum_{i=-1}^1 \bar{\pi}(n_1-i,1) q_{i,-1}
\end{multline*}}
Next, we only need to plug in the transition rates given in the theorem and check if the equation above holds. Starting from the right hand side of the equation above, we have
\begin{eqnarray*}
RHS = \bar{\pi}(n_1,0) \sum_{i=-1}^1 q_{i,1} - \sum_{i=-1}^1 \bar{\pi}(n_1-i,1) q_{i,-1} = \sum_{k=1}^{K} c_k \alpha_k \rho_k^{n_1},
\end{eqnarray*}
where the last equality holds by definition of $\alpha_k$. Moreover, 
{\footnotesize
\begin{align*}
LHS &= \bar{\pi}(n_1-1,0)\bar{h}_{1,0} + \bar{\pi}(n_1+1,0) \bar{h}_{-1,0}(n_1+1,0) - \bar{\pi}(n_1,0)\bar{h}_{1,0} - \bar{\pi}(n_1,0)\bar{h}_{-1,0}(n_1,0) \\
&= \sum_{k=1}^{K} c_k \alpha_k (1-\rho_k)^{-1}\rho_k^{n_1} - \sum_{k=1}^K c_k \alpha_k (1-\rho_k)^{-1} \rho_k^{n_1+1} \\
&= \sum_{k=1}^{K} c_k \alpha_k \rho_k^{n_1} = RHS. 
\end{align*}
}
Hence, the balance equation on the horizontal axis holds. In the same fashion, balance equations on the vertical axis and at the origin can be verified. Therefore, we conclude that $\bar{\pi}$ is indeed the stationary probability distribution of $\bar{R}$ with the transition rates specified in the theorem. 
\end{proof}

In Theorem~\ref{thm:perturb_transition_rates}, one way to construct the transition rates of $\bar{R}$ is proposed. Further in the paper, this construction approach will be referred to as \emph{the single direction inhomogeneous construction}. 

In the following part, the behavior of $\bar{h}_{-1,0}(n_1,0)$ will be discussed. We first consider its convergence.

\subsection{Convergence of the inhomogeneous rates}

Define
\begin{eqnarray*}
\rho_* = \max\left\{ \rho_1,\dots,\rho_K \right\}, \qquad \sigma_{**} = \max\left\{ \sigma_1,\dots,\sigma_K \right\}. 
\end{eqnarray*}
Moreover, let $\sigma_*$($\rho_{**}$) be the weighted average of all the $\sigma$($\rho$)'s that pair up with $\rho_*$($\sigma_{**}$) in $\Gamma$, \ie
\begin{eqnarray*}
\sigma_* = \frac{\sum_{k:(\rho_*,\sigma_k) \in \Gamma} c_k\sigma_k}{\sum_{k:(\rho_*,\sigma_k) \in \Gamma} c_k}, \qquad \rho_{**} = \frac{\sum_{k:(\rho_k,\sigma_{**}) \in \Gamma} c_k\rho_k}{\sum_{k:(\rho_k,\sigma_{**}) \in \Gamma} c_k}. 
\end{eqnarray*}
There are at most two terms in the weighted average, according to the discussion in~\cite{chen2015invariant}. In the next theorem, it is shown that $\bar{h}_{-1,0}(n_1,0)$ and $\bar{v}_{0,-1}(0,n_2)$ converge as $n_1, n_2$ go to infinity and a close-form expression for the limiting rate is given. 

\begin{theorem}
\label{thm:limiting_probability}
Let $\bar{\pi}$ be a sum of geometric terms satisfying interior balance equation. Suppose that the transition rates of $\bar{R}$ are obtained by the single direction inhomogeneous construction. Then, $\bar{h}_{-1,0}(n_1,0)$ and $\bar{v}_{0,-1}(0,n_2)$ converge and the limits are, 
\begin{align*}
\lim_{n_1\rightarrow \infty} \bar{h}_{-1,0}(n_1,0) &= \rho_*^{-1}\bar{h}_{1,0} - \sum_{i=-1}^{1} (1-\rho_*)^{-1} \left( q_{i,1} - \rho_*^{-i}\sigma_*q_{i,-1} \right), \\
\lim_{n_2\rightarrow \infty} \bar{v}_{0,-1}(0,n_2) &= \sigma_{**}^{-1}\bar{v}_{0,1} - \sum_{j=-1}^{1} (1-\sigma_{**})^{-1} \left( q_{1,j} - \rho_{**}\sigma_{**}^{-j}q_{-1,j} \right). 
\end{align*}
\end{theorem}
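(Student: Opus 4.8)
The plan is to perform a direct asymptotic analysis of the explicit expression for $\bar{h}_{-1,0}(n_1,0)$ supplied by Theorem~\ref{thm:perturb_transition_rates}, isolating the dominant geometric term as $n_1 \to \infty$. Both fractions defining $\bar{h}_{-1,0}(n_1,0)$ have $\sum_{k=1}^K c_k \rho_k^{n_1}$ (up to an index shift) as denominator, so the first step is to factor $\rho_*^{n_1}$ out of every sum in sight. Writing $\rho_k^{n_1} = \rho_*^{n_1}(\rho_k/\rho_*)^{n_1}$, each summand with $\rho_k < \rho_*$ carries a factor $(\rho_k/\rho_*)^{n_1}\to 0$, while the summands with $\rho_k = \rho_*$ — \ie those indices $k$ with $(\rho_*,\sigma_k)\in\Gamma$ — survive. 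After cancelling the common power $\rho_*^{n_1}$, numerator and denominator each converge to finite limits, reducing every fraction to a ratio of surviving coefficient sums.

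For the first fraction, the surviving numerator and denominator are $\rho_*^{-1}\sum_{k:(\rho_*,\sigma_k)\in\Gamma} c_k$ and $\sum_{k:(\rho_*,\sigma_k)\in\Gamma} c_k$, so its limit is simply $\rho_*^{-1}$ and the whole first term tends to $\rho_*^{-1}\bar{h}_{1,0}$. For the second fraction the surviving numerator is $(1-\rho_*)^{-1}\sum_{k:(\rho_*,\sigma_k)\in\Gamma} c_k \alpha_k$, where $\alpha_k = \sum_{i=-1}^1 q_{i,1} - \sum_{i=-1}^1 \rho_*^{-i}\sigma_k q_{i,-1}$ once $\rho_k=\rho_*$ is substituted. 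Splitting $\alpha_k$ into its $\sigma_k$-free and $\sigma_k$-linear parts, the $\sigma_k$-free part factors as $\big(\sum_{i=-1}^1 q_{i,1}\big)\sum_{k:(\rho_*,\sigma_k)\in\Gamma} c_k$, and the $\sigma_k$-linear part as $\sum_{i=-1}^1\rho_*^{-i}q_{i,-1}\sum_{k:(\rho_*,\sigma_k)\in\Gamma} c_k\sigma_k$. Dividing by $\sum_{k:(\rho_*,\sigma_k)\in\Gamma} c_k$ collapses the first piece to $\sum_{i=-1}^1 q_{i,1}$ and turns the coefficient multiplying $\sum_{i=-1}^1\rho_*^{-i}q_{i,-1}$ into precisely the weighted average $\sigma_*$ defined just before the theorem. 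Recombining the two fractions yields the stated limit, and the vertical rate $\bar{v}_{0,-1}(0,n_2)$ follows by the identical computation with the roles of $\rho$ and $\sigma$ interchanged.

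I expect the main difficulty to be bookkeeping rather than conceptual: one must track exactly which indices survive the limit and check that the $\sigma_k$-linear contribution assembles into the definition of $\sigma_*$. The one genuine hypothesis worth flagging is that $\sum_{k:(\rho_*,\sigma_k)\in\Gamma} c_k \neq 0$, which is what makes the limiting denominator nonzero and hence guarantees convergence to a finite value. Since $\bar{\pi}(n_1,0)=\sum_k c_k\rho_k^{n_1}\ge 0$ with leading asymptotics governed by $\rho_*^{n_1}\sum_{k:(\rho_*,\sigma_k)\in\Gamma} c_k$, this coefficient sum is in fact strictly positive, so the division is justified. The remark that at most two $\sigma_k$ pair with $\rho_*$ is not needed for convergence, but it confirms that $\sigma_*$ is a genuine convex-type combination of at most two values.
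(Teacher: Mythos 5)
Your proposal is correct and follows essentially the same route as the paper's own proof: factor the dominant power $\rho_*^{n_1}$ out of numerator and denominator, observe that every term with $\rho_k<\rho_*$ vanishes in the limit, and recognize the surviving $\sigma_k$-linear contribution as the weighted average $\sigma_*$. Your explicit flagging of the hypothesis $\sum_{k:(\rho_*,\sigma_k)\in\Gamma} c_k \neq 0$ is a point the paper leaves implicit (it is already presupposed by the very definition of $\sigma_*$), so this is a welcome clarification rather than a deviation.
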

\begin{proof}
We only give the proof for the convergence of $\bar{h}_{-1,0}(n_1,0)$. The proof for $\bar{v}_{0,-1}(0,n_2)$ follows in similar fashion. From Theorem~\ref{thm:perturb_transition_rates}, it is seen that
\begin{eqnarray*}
\bar{h}_{-1,0}(n_1,0) &=& \frac{\sum_{k=1}^K c_k\rho_k^{n_1-1}}{\sum_{k=1}^K c_k\rho_k^{n_1}}\bar{h}_{1,0} - \frac{\sum_{k=1}^K c_k\alpha_k\rho_k^{n_1} (1-\rho_k)^{-1}}{\sum_{k=1}^K c_k\rho_k^{n_1}}. 
\end{eqnarray*}
Consider the quotient of sums of geometric terms, it holds that
\begin{align*}
\frac{\sum_{k=1}^K c_k\rho_k^{n_1-1}}{\sum_{k=1}^K c_k\rho_k^{n_1}} &= \frac{\rho_*^{n_1-1} [ \sum_{k:\rho_k = \rho_*} c_k + \sum_{k:\rho_k \neq \rho_*} c_k(\frac{\rho_k}{\rho_*})^{n_1-1} ]}{\rho_*^{n_1} [ \sum_{k:\rho_k = \rho_*} c_k + \sum_{k:\rho_k \neq \rho_*} c_k(\frac{\rho_k}{\rho_*})^{n_1} ]} \\
&= \rho_*^{-1} \cdot \frac{\sum_{k:\rho_k = \rho_*}c_k + \sum_{k:\rho_k \neq \rho_*} c_k(\frac{\rho_k}{\rho_*})^{n_1-1}}{\sum_{k:\rho_k = \rho_*} c_k + \sum_{k:\rho_k \neq \rho_*} c_k(\frac{\rho_k}{\rho_*})^{n_1}}.
\end{align*}
Since $\rho_k/\rho_* < 1$ for $\rho_{k} \neq \rho_*$, by letting $n_1$ go to infinity on both sides, we obtain
\begin{eqnarray*}
\lim_{n_1\rightarrow\infty} \frac{\sum_{k=1}^{K} c_k\rho_k^{n_1-1}}{\sum_{k=1}^{K} c_k\rho_k^{n_1}}\bar{h}_{1,0} = \rho_*^{-1}\bar{h}_{1,0}.
\end{eqnarray*}
Recall that $\alpha_k = \sum_{i=-1}^1 q_{i,1} - \sum_{i=-1}^1 \rho_k^{-i}\sigma_k q_{i,-1}$. Then the following limits hold, 
\begin{align*}
\lim_{n_1\rightarrow \infty} \frac{\sum_{k=1}^{K} c_k (1-\rho_k)^{-1}\rho_k^{n_1} q_{i,1}}{\sum_{k=1}^{K} c_k\rho_k^{n_1}} &= (1-\rho_*)^{-1} q_{i,1}, \qquad \forall i = -1,0,1, \\
\lim_{n_1\rightarrow \infty} \frac{\sum_{k=1}^{K} c_k (1-\rho_k)^{-1}\rho_k^{n_1-i}\sigma_k q_{i,-1}}{\sum_{k=1}^{K} c_k\rho_k^{n_1}} &= (1-\rho_*)^{-1}\rho_*^{-i}\sigma_* q_{i,-1}, \qquad \forall i = -1,0,1. 
\end{align*}
Therefore, the limiting rate is given by
\begin{eqnarray*}
\lim_{n_1\rightarrow \infty} \bar{h}_{-1,0}(n_1,0) = \rho_*^{-1}\bar{h}_{1,0} - \sum_{i=-1}^{1} (1-\rho_*)^{-1} \left( q_{i,1} - \rho_*^{-i}\sigma_* q_{i,-1} \right). 
\end{eqnarray*}
\end{proof}

From the convergence result, the inhomogeneous rates are bounded as well. Hence, the following corollary follows immediately. 
\begin{corollary}
\label{cor:uniformizable}
Let $\bar{\pi}$ be a sum of geometric terms satisfying interior balance equation. Suppose that the transition rates of $\bar{R}$ are obtained by the single direction inhomogeneous construction. Then there exists a constant $0 < \bar{\gamma} < \infty$, such that for any $(n_1,n_2)\in S$,
\begin{eqnarray*}
\sum_{(i,j)\in N(n_1,n_2)} \bar{q}_{i,j}(n_1,n_2) \le \bar{\gamma}. 
\end{eqnarray*}
\end{corollary}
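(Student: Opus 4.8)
The plan is to exploit the fact that only two of the transition-rate families of $\bar{R}$ depend on the state, together with the convergence already established in Theorem~\ref{thm:limiting_probability}. First I would partition the rates according to the construction in Theorem~\ref{thm:perturb_transition_rates}. All the interior rates $\bar{q}_{i,j}(n_1,n_2)=q_{i,j}$, the axis-to-interior rates $\bar{h}_{i,1}(n_1,0)=q_{i,1}$ and $\bar{v}_{i,j}(0,n_2)=q_{1,j}$, the along-axis rates $\bar{h}_{1,0}(n_1,0)=\bar{h}_{1,0}$ and $\bar{v}_{0,1}(0,n_2)=\bar{v}_{0,1}$, and the origin rates $\bar{r}_{1,0},\bar{r}_{0,1},\bar{r}_{1,1}$ are all constants that do not depend on the state. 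Their contribution to $\sum_{(i,j)\in N(n_1,n_2)}\bar{q}_{i,j}(n_1,n_2)$ is therefore uniformly bounded by a fixed constant on every component of the state space. The only state-dependent rates are $\bar{h}_{-1,0}(n_1,0)$ and $\bar{v}_{0,-1}(0,n_2)$, so the corollary reduces to showing that these two families are bounded over $n_1,n_2=1,2,\dots$.

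For the family $\bar{h}_{-1,0}(n_1,0)$ I would first note that each term is finite: the denominator $\sum_{k=1}^K c_k\rho_k^{n_1}$ equals $\bar{\pi}(n_1,0)$, which is strictly positive because $\bar{\pi}$ is a (positive) probability distribution, so the quotient defining the rate is well defined for every $n_1$. By Theorem~\ref{thm:limiting_probability} the sequence $\bar{h}_{-1,0}(n_1,0)$ converges to a finite limit $L_h$ as $n_1\to\infty$. Any real sequence that converges to a finite limit is bounded: choosing $M$ with $|\bar{h}_{-1,0}(n_1,0)-L_h|\le 1$ for all $n_1\ge M$ controls the tail, while $\bar{h}_{-1,0}(1,0),\dots,\bar{h}_{-1,0}(M-1,0)$ is a finite set of finite reals, and the maximum of these bounds gives a finite $B_h$ with $\bar{h}_{-1,0}(n_1,0)\le B_h$ for all $n_1$. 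The identical argument applied to $\bar{v}_{0,-1}(0,n_2)$, using the second limit of Theorem~\ref{thm:limiting_probability} and the positivity of $\bar{\pi}(0,n_2)$, yields a finite bound $B_v$.

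Finally I would combine the pieces. On each of the four components the sum of outgoing rates is a fixed finite sum of constants plus at most one of the inhomogeneous rates (on $S_1$ the term $\bar{h}_{-1,0}(n_1,0)$, on $S_2$ the term $\bar{v}_{0,-1}(0,n_2)$, and none on $S_3,S_4$). Bounding each such sum by the corresponding constant part plus $B_h$ or $B_v$ and taking $\bar{\gamma}$ to be the maximum over the four components gives the required uniform bound. The statement is essentially immediate once Theorem~\ref{thm:limiting_probability} is in hand; the only point that needs care, and the closest thing to an obstacle, is the well-definedness of the inhomogeneous rates, i.e.\ verifying that the denominators $\bar{\pi}(n_1,0)$ and $\bar{\pi}(0,n_2)$ never vanish, which I would justify from the positivity of the stationary distribution.
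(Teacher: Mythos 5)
Your proposal is correct and follows essentially the same route as the paper, which derives the corollary immediately from Theorem~\ref{thm:limiting_probability}: the only state-dependent rates are $\bar{h}_{-1,0}(n_1,0)$ and $\bar{v}_{0,-1}(0,n_2)$, and since they converge they are bounded, while all remaining rates are constants. Your additional check that the denominators $\bar{\pi}(n_1,0)$ and $\bar{\pi}(0,n_2)$ never vanish is a reasonable point of care that the paper leaves implicit.
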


In addition, it is observed that for any chosen $\bar{h}_{1,0}$, the limiting rates depend only on $(\rho_*,\sigma_*)$ or $(\rho_{**},\sigma_{**})$ out of the parameters in $\bar{\pi}$. Hence, it is clear that if $\bar{h}_{1,0}, \bar{v}_{0,1}$ are chosen such that the following equations hold,
\begin{align*}
\rho_*^{-1}\bar{h}_{1,0} - \sum_{i=-1}^{1} (1-\rho_*)^{-1} \left( q_{i,1} - \rho_*^{-i}\sigma_*q_{i,-1} \right) = h_{-1,0}, \\
\sigma_{**}^{-1}\bar{v}_{0,1} - \sum_{j=-1}^{1} (1-\sigma_{**})^{-1} \left( q_{1,j} - \rho_{**}\sigma_{**}^{-j}q_{-1,j} \right) = v_{0,-1}, 
\end{align*}
the limiting rates will be the same as $h_{-1,0}$ and $v_{0,-1}$. This choice will be reconsidered for the error bound in Section~\ref{sec:numerical_experiment}. In this subsection we have discussed the limit of the inhomogeneous transition rates. In the next subsection, we consider the condition under which the inhomogeneous rates are above the original rate for all states.

\subsection{Sufficient conditions for $\bar{h}_{-1,0}(n_1,0) \ge h_{-1,0}$, $\bar{v}_{0,-1}(0,n_2) \ge v_{0,-1}$} 

From Corollary~\ref{cor:uniformizable}, it is seen that $\bar{R}$ is uniformizable. Moreover, it can be observed that for any $n_1$, $\bar{h}_{-1,0}(n_1,0)$ is a linear function in $\bar{h}_{1,0}$ with a positive slope. Then, the larger $\bar{h}_{1,0}$ is chosen to be, the larger $\bar{h}_{-1,0}(n_1,0)$ gets in general. Hence, due to the convergence of the inhomogeneous transition rates, there are thresholds for $\bar{h}_{1,0}$ and $\bar{v}_{0,1}$ such that $\bar{h}_{-1,0}(n_1,0) \ge h_{-1,0}$ and $\bar{v}_{0,-1}(0,n_2) \ge v_{0,-1}$. The thresholds will be used for the error bound result in the next section.

However, it is unclear how the thresholds depend on the given parameters. This is because that properties such as monotonicity or zeros of the sum of geometric terms with negative coefficients are still open problems. In the next theorem, it is seen that if all the coefficients are positive, the thresholds for $\bar{h}_{1,0}$ and $\bar{v}_{0,1}$ can be related to the parameters $\rho$'s and $\sigma$'s. 

\begin{theorem}
\label{thm:inhomo_transition_above_threshold_postivie_coefficient}
Let $\bar{\pi}$ be a sum of geometric terms satisfying interior balance equation. Suppose that the transition rates of $\bar{R}$ are obtained by the single direction inhomogeneous construction. Moreover, assume that $c_k > 0$ for all $k=1,\dots,K$. Then, choosing
\begin{align*}
\bar{h}_{1,0} &\ge \max_{k=1,\dots,K} \left\{ \rho_k h_{-1,0}+\rho_k\alpha_k(1-\rho_k)^{-1} \right\}, \\
\bar{v}_{0,1} &\ge \max_{k=1,\dots,K} \left\{ \sigma_k v_{0,-1}+\sigma_k\beta_k(1-\sigma_k)^{-1} \right\}.  
\end{align*}
gives that $\bar{h}_{-1,0}(n_1,0) \ge h_{-1,0}$, $\bar{v}_{0,-1}(0,n_2) \ge v_{0,-1}$ for $n_1,n_2=1,2,\dots$. 
\end{theorem}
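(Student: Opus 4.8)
The plan is to clear denominators and reduce the claimed inequality to a term-by-term sign check that the positivity of the $c_k$ makes available. I focus on $\bar{h}_{-1,0}(n_1,0) \ge h_{-1,0}$; the argument for $\bar{v}_{0,-1}(0,n_2) \ge v_{0,-1}$ is identical after replacing $\rho_k$, $\bar{h}_{1,0}$, $\alpha_k$ by $\sigma_k$, $\bar{v}_{0,1}$, $\beta_k$.

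First I would observe that since $c_k > 0$ and $\rho_k \in (0,1)$ for every $k$, the denominator $\sum_{k=1}^K c_k \rho_k^{n_1}$ in the formula for $\bar{h}_{-1,0}(n_1,0)$ supplied by Theorem~\ref{thm:perturb_transition_rates} is strictly positive for each $n_1$. Multiplying the target inequality $\bar{h}_{-1,0}(n_1,0) \ge h_{-1,0}$ by this positive quantity yields the equivalent statement
\begin{equation*}
\bar{h}_{1,0} \sum_{k=1}^K c_k \rho_k^{n_1-1} - \sum_{k=1}^K c_k \alpha_k (1-\rho_k)^{-1} \rho_k^{n_1} \ge h_{-1,0} \sum_{k=1}^K c_k \rho_k^{n_1}.
\end{equation*}

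Next I would move everything to one side and factor $\rho_k^{n_1-1}$ out of each summand, using $\rho_k^{n_1} = \rho_k \cdot \rho_k^{n_1-1}$, which rewrites the inequality as
\begin{equation*}
\sum_{k=1}^K c_k \rho_k^{n_1-1} \Bigl[ \bar{h}_{1,0} - \rho_k\bigl( h_{-1,0} + \alpha_k (1-\rho_k)^{-1} \bigr) \Bigr] \ge 0.
\end{equation*}
The key observation is that the bracketed expression equals $\bar{h}_{1,0}$ minus the $k$-th candidate $\rho_k h_{-1,0} + \rho_k \alpha_k (1-\rho_k)^{-1}$ inside the maximum in the hypothesis. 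Hence the assumed lower bound on $\bar{h}_{1,0}$ forces every bracket to be nonnegative; since $c_k > 0$ and $\rho_k^{n_1-1} > 0$, each summand is individually nonnegative, so the sum is nonnegative and the inequality holds for all $n_1 = 1,2,\dots$.

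I do not anticipate a genuine obstacle: the argument is an algebraic rearrangement followed by a sign check. The only delicate point — and precisely the reason the hypothesis $c_k > 0$ is imposed — is that the final step passes from term-by-term nonnegativity of the brackets to nonnegativity of the sum, which requires every weight $c_k \rho_k^{n_1-1}$ to be positive. For coefficients of mixed sign this reduction breaks down, consistent with the remark preceding the theorem that the thresholds cannot in general be related to the parameters $\rho_k$, $\sigma_k$.
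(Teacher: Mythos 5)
Your proposal is correct and follows essentially the same route as the paper: clear the (positive) denominator in the formula from Theorem~\ref{thm:perturb_transition_rates}, factor out $c_k\rho_k^{n_1-1}$, and use $c_k>0$ to compare $\bar{h}_{1,0}$ with the maximal candidate $\rho_k h_{-1,0}+\rho_k\alpha_k(1-\rho_k)^{-1}$. The only cosmetic difference is that the paper phrases the last step as ``a weighted average is at most the maximum'' while you verify nonnegativity of each bracketed term, which is the same observation.
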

\begin{proof}
From Theorem~\ref{thm:perturb_transition_rates}, $\bar{h}_{-1,0}(n_1,0) \ge h_{-1,0}$ if and only if for any $n_1=1,2,\dots$,
\begin{eqnarray*}
\frac{\sum_{k=1}^K c_k\rho_k^{n_1-1}}{\sum_{k=1}^K c_k\rho_k^{n_1}}\bar{h}_{1,0} - \frac{\sum_{k=1}^K c_k\alpha_k(1-\rho_k)^{-1}\rho_k^{n_1}}{\sum_{k=1}^K c_k\rho_k^{n_1}} \ge h_{-1,0}, 
\end{eqnarray*}
which is equivalent to
\begin{eqnarray*}
\bar{h}_{1,0} \ge \frac{\sum_{k=1}^K c_k\rho_k^{n_1}}{\sum_{k=1}^K c_k\rho_k^{n_1-1}} h_{-1,0} + \frac{\sum_{k=1}^K c_k\alpha_k(1-\rho_k)^{-1} \rho_k^{n_1}}{\sum_{k=1}^K c_k\rho_k^{n_1-1}}.
\end{eqnarray*}
Rewriting the right hand side gives that 
\begin{eqnarray*}
\bar{h}_{1,0} &\ge& \frac{\sum_{k=1}^K \left( c_k\rho_k^{n_1}h_{-1,0} + c_k\alpha_k(1-\rho_k)^{-1} \rho_k^{n_1} \right)}{\sum_{k=1}^K c_k\rho_k^{n_1-1}} \\
&=& \frac{\sum_{k=1}^K c_k\rho_k^{n_1-1} \left( \rho_k h_{-1,0} + \rho_k\alpha_k(1-\rho_k)^{-1} \right)}{\sum_{k=1}^K c_k\rho_k^{n_1-1}}
\end{eqnarray*}
Observing the right hand side, it is concluded that for any $n_1$, it is a weighted average of $\rho_k h_{-1,0} + \rho_k\alpha_k(1-\rho_k)^{-1}$ over $k$. Therefore, for any $n_1=1,2,\dots$, 
{\small
\begin{eqnarray*}
\frac{\sum_{k=1}^K c_k\rho_k^{n_1-1} \left( \rho_k h_{-1,0} + \rho_k\alpha_k(1-\rho_k)^{-1} \right)}{\sum_{k=1}^K c_k\rho_k^{n_1-1}} \le \max_{k=1,\dots,K} \left\{ \rho_k h_{-1,0} + \rho_k\alpha_k(1-\rho_k)^{-1} \right\}.
\end{eqnarray*}
}
For the vertical axis, the same argument can be used for the proof. 
\end{proof}

Summing up the results in this section, a way to construct inhomogeneous transition rates is proposed for the given sum of geometric terms $\bar{\pi}$. Moreover, the properties of the constructed transition rates are discussed. In the next section, the error bound will be considered and an explicit expression will be given.

\section{Error bounds on the average stationary performance}
\label{sec:error_bound}

\subsection{Markov reward approach}

In this section, the Markov reward approach is used to derive the error bound, which has been developed by van Dijk. The first step of applying the Markov reward approach is to obtain equivalent discrete-time Markov chains corresponding to $R$ and $\bar{R}$ using the standard uniformization approach. This is possible since both $R$ and $\bar{R}$ are uniformizable, \ie there exists $\gamma < \infty$, for which 
\begin{eqnarray*}
\sum_{(i,j) \in N(n_1,n_2)} q_{i,j}(n_1,n_2) \le \gamma, \quad \sum_{(i,j) \in N(n_1,n_2)} \bar{q}_{i,j}(n_1,n_2) \le \gamma, \qquad \forall (n_1,n_2)\in S. 
\end{eqnarray*}
Then the uniformized discrete-time Markov chains have the following transition probabilities,
\begin{align*}
p_{i,j}(n_1,n_2) &= \left\{
\begin{array}{l@{\qquad}l}
\gamma^{-1}q_{i,j}(n_1,n_2), & (i,j) \neq (0,0), \\
1 - \gamma^{-1} \sum_{(i,j) \neq (0,0)} q_{i,j}(n_1,n_2), & (i,j) = (0,0), 
\end{array}
\right. \\
\bar{p}_{i,j}(n_1,n_2) &= \left\{
\begin{array}{l@{\qquad}l}
\gamma^{-1}\bar{q}_{i,j}(n_1,n_2), & (i,j) \neq (0,0), \\
1 - \gamma^{-1} \sum_{(i,j) \neq (0,0)} \bar{q}_{i,j}(n_1,n_2), & (i,j) = (0,0). 
\end{array}
\right. 
\end{align*}
The constant $\gamma$ is used in the uniformization step. It is not used in the main result of the Markov reward approach. 

The reward function $F(n)$ is seen as a one-step reward when the discrete-time Markov chains stays in $n$ for one time unit. Define $F^t(n_1,n_2)$ as the expected cumulative reward up to time $t$ starting from state $(n_1,n_2)$ at time $0$, \ie
{\small
\begin{eqnarray*}
F^t(n_1,n_2) = \left\{
\begin{array}{ll}
0, & t = 0, \\
F(n_1,n_2) + \sum_{(i,j) \in N(n_1,n_2)} p_{i,j}(n_1,n_2) F^{t-1}(n_1+i,n_2+j), & t \geq 1. 
\end{array}\right. 
\end{eqnarray*}
}
Then, under the ergodicity condition, it holds that 
\begin{eqnarray*}
\lim_{t\rightarrow \infty} \frac{F^t(n_1,n_2)}{t} = \mathcal{F}, \qquad \forall (n_1,n_2) \in S. 
\end{eqnarray*}
Moreover, for $(n_1,n_2) \in S$, define the bias terms as follows, 
\begin{eqnarray*}
D^t_{i,j}(n_1,n_2) = F^t(n_1+i,n_2+j) - F^t(n_1,n_2),
\end{eqnarray*}
The main result for building up the error bound is given below. Here the result is presented in the convenience of our model and notation. A more general form of the result is given in~\cite{vandijk11inbook}. 

\begin{theorem}[Result 9.3.2 in~\cite{vandijk11inbook}]
\label{thm:markov_reward_result}
Suppose that there exist $G_1: S_1 \rightarrow [0,\infty)$, $G_2: S_2 \rightarrow [0,\infty)$ and $G_3 \ge 0$ for which
\begin{align}
\label{eq:construct_error_horizontal}
\left| \sum_{(i,j) \in N_1} \left[ \bar{h}_{i,j}(n_1,0) - h_{i,j} \right] D^t_{i,j}(n_1,0) \right| &\le G_1(n_1,0), \quad \forall n_1=1,2,\dots, \\
\label{eq:construct_error_vertical}
\left| \sum_{(i,j) \in N_2} \left[ \bar{v}_{i,j}(0,n_2) - v_{i,j} \right] D^t_{i,j}(0,n_2) \right| &\le G_2(0,n_2), \quad \forall n_2=1,2,\dots, \\
\label{eq:construct_error_origin}
\left| \sum_{(i,j) \in N_3} \left[ \bar{r}_{i,j} - r_{i,j} \right] D^t_{i,j}(0,0) \right| &\le G_3, 
\end{align}
for all $t \ge 0$. Then 
\begin{eqnarray*}
\left| \bar{\mathcal{F}} - \mathcal{F} \right| \le \sum_{n_1=1}^\infty \bar{\pi}(n_1,0) G_1(n_1,0) + \sum_{n_2=1}^\infty \bar{\pi}(0,n_2) G_2(0,n_2) + \bar{\pi}(0,0) G_3. 
\end{eqnarray*}
\end{theorem}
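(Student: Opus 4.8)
The plan is to compare the expected cumulative reward $F^t$ of the \emph{original} chain against its average under the perturbed stationary law $\bar\pi$, exploiting that $\bar\pi$ is stationary for the uniformized chain of $\bar R$. First I would fix $t\ge 1$ and evaluate $\sum_{(n_1,n_2)}\bar\pi(n_1,n_2)\,[F^t(n_1,n_2)-F^{t-1}(n_1,n_2)]$ in two ways. Expanding $F^t$ through its defining recursion (which is built from the \emph{original} probabilities $p_{i,j}$) gives $\sum_n\bar\pi(n)F^t(n)=\bar{\mathcal F}+\sum_n\bar\pi(n)\sum_{(i,j)}p_{i,j}(n)F^{t-1}(n+i,n+j)$, while stationarity of $\bar\pi$ under $\bar p_{i,j}$ gives $\sum_n\bar\pi(n)F^{t-1}(n)=\sum_n\bar\pi(n)\sum_{(i,j)}\bar p_{i,j}(n)F^{t-1}(n+i,n+j)$. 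Subtracting yields the exact identity
\[
\sum_n\bar\pi(n)\,[F^t(n)-F^{t-1}(n)]=\bar{\mathcal F}+\sum_n\bar\pi(n)\sum_{(i,j)}[p_{i,j}(n)-\bar p_{i,j}(n)]\,F^{t-1}(n+i,n+j).
\]
Since $F^t(n)-F^{t-1}(n)\to\mathcal F$ and $\sum_n\bar\pi(n)=1$, letting $t\to\infty$ turns the left side into $\mathcal F$, so that $\bar{\mathcal F}-\mathcal F=\lim_{t\to\infty}\sum_n\bar\pi(n)\sum_{(i,j)}[\bar p_{i,j}(n)-p_{i,j}(n)]F^{t-1}(n+i,n+j)$.

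Second, I would center the reward using the zero-sum property of the perturbation. Because $\sum_{(i,j)}\bar p_{i,j}(n)=\sum_{(i,j)}p_{i,j}(n)=1$, the term $F^{t-1}(n)$ may be subtracted freely, so $F^{t-1}(n+i,n+j)$ can be replaced by $D^{t-1}_{i,j}(n)=F^{t-1}(n+i,n+j)-F^{t-1}(n)$. By Theorem~\ref{thm:perturb_transition_rates} the interior rates of $\bar R$ and $R$ coincide, hence $\bar p_{i,j}(n)-p_{i,j}(n)=0$ for every interior state and the summation over $n$ collapses onto the three boundary components $S_1$, $S_2$, $S_3$; on these the probability differences are exactly $\bar h_{i,j}(n_1,0)-h_{i,j}$, $\bar v_{i,j}(0,n_2)-v_{i,j}$ and $\bar r_{i,j}-r_{i,j}$ (up to the common uniformization constant, which---consistent with the remark preceding the theorem---cancels against the scaling of the bias terms and does not appear in the final bound). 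This gives $|\bar{\mathcal F}-\mathcal F|=\big|\lim_t[\,\Sigma_1^t+\Sigma_2^t+\Sigma_3^t\,]\big|$, where $\Sigma_1^t=\sum_{n_1\ge1}\bar\pi(n_1,0)\sum_{(i,j)\in N_1}[\bar h_{i,j}(n_1,0)-h_{i,j}]D^{t-1}_{i,j}(n_1,0)$ and $\Sigma_2^t,\Sigma_3^t$ are the analogous sums over $S_2$ and $S_3$.

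Finally I would apply the triangle inequality, first over $(i,j)$ within each state and then over states, and invoke the hypotheses \eqref{eq:construct_error_horizontal}--\eqref{eq:construct_error_origin} to replace the inner sums by $G_1(n_1,0)$, $G_2(0,n_2)$ and $G_3$. Crucially these bounds hold for every $t$, so each partial bound $\sum_{n_1}\bar\pi(n_1,0)G_1(n_1,0)+\sum_{n_2}\bar\pi(0,n_2)G_2(0,n_2)+\bar\pi(0,0)G_3$ is independent of $t$; passing to the limit then yields the claimed inequality. The step I expect to be the main obstacle is the analytic justification of exchanging the limit $t\to\infty$ with the infinite summation over the unbounded state space: for the left-hand identity one needs $F^t(n)-F^{t-1}(n)\to\mathcal F$ together with a $\bar\pi$-integrable dominating bound (the increments are controlled by $\|F\|_\infty$, or more generally by the $\bar\pi$-integrability of $F$), and for the right-hand side the uniform-in-$t$ majorants $G_1,G_2,G_3$ must themselves be summable against $\bar\pi$ so that the limit can be taken term by term. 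Establishing these integrability and dominated-convergence conditions, rather than the algebraic identities, is where the real work lies.
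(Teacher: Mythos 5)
The first thing to note is that the paper contains no proof of this statement at all: it is quoted verbatim as Result 9.3.2 of van Dijk's chapter~\cite{vandijk11inbook}, so your attempt can only be measured against the standard Markov-reward argument behind that result. Your skeleton is exactly that argument, in its ``stationary'' form: expand $F^t$ through its recursion under the original kernel $p$, use stationarity of $\bar{\pi}$ under the perturbed kernel $\bar{p}$, subtract to get an exact identity, use the zero-row-sum property of $\bar{p}-p$ to replace $F^{t-1}(n_1+i,n_2+j)$ by the bias terms $D^{t-1}_{i,j}(n_1,n_2)$, use equality of the interior rates (Theorem~\ref{thm:perturb_transition_rates}) to collapse the sum onto $S_1\cup S_2\cup S_3$, and then invoke the uniform-in-$t$ hypotheses \eqref{eq:construct_error_horizontal}--\eqref{eq:construct_error_origin}. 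The architecture is sound and is essentially the proof the cited source gives.

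There are, however, two concrete gaps. First, the uniformization constant: your own second step gives $\bar{p}_{i,j}(n_1,n_2)-p_{i,j}(n_1,n_2)=\gamma^{-1}\left[\bar{q}_{i,j}(n_1,n_2)-q_{i,j}(n_1,n_2)\right]$, so the identity you derive actually yields $\left|\bar{\mathcal{F}}-\mathcal{F}\right|\le \gamma^{-1}\left(\sum_{n_1\ge 1}\bar{\pi}(n_1,0)G_1(n_1,0)+\sum_{n_2\ge 1}\bar{\pi}(0,n_2)G_2(0,n_2)+\bar{\pi}(0,0)G_3\right)$. Your parenthetical claim that this constant ``cancels against the scaling of the bias terms'' is not a proof step: the $D^t_{i,j}$ appearing in the hypotheses are those of the \emph{same} $\gamma$-uniformized chain, so within your derivation nothing cancels and the factor $\gamma^{-1}$ simply remains. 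The stated form of the theorem corresponds to the normalization $\gamma=1$ (harmless, since rescaling all rates by $\gamma^{-1}$ changes neither $\pi$, $\bar{\pi}$, $\mathcal{F}$ nor $\bar{\mathcal{F}}$, and this is what the paper's remark that $\gamma$ ``is not used in the main result'' implicitly encodes); your proof must either adopt that normalization explicitly or keep $\gamma^{-1}$ and observe that the stated bound follows whenever $\gamma\ge 1$. Second, you rely on the pointwise convergence $F^t(n_1,n_2)-F^{t-1}(n_1,n_2)\to\mathcal{F}$, but the paper only supplies the Ces\`aro limit $F^t(n_1,n_2)/t\to\mathcal{F}$; the pointwise statement needs a separate justification on a countable state space. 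The standard repair, which also lightens your dominated-convergence burden, is to sum your exact identity over $t=1,\dots,T$ and divide by $T$: the left side then needs only $F^T/T\to\mathcal{F}$, while each term on the right is bounded uniformly in $t$ by the claimed sum, hence so is its Ces\`aro average. Finally, the integrability issues you flag at the end are genuine and not cosmetic --- $F$ is unbounded in the paper's own example $F(n_1,n_2)=n_1$, so $\|F\|_\infty$ arguments do not apply and $\bar{\pi}$-summability of the relevant quantities must be imposed or verified; flagging them is honest, but a complete proof has to supply them.
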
 

Notice that conditions in Equation~\eqref{eq:construct_error_horizontal}~-~\eqref{eq:construct_error_origin} need to hold for all $t\ge 0$, hence it is important to obtain bounds on $D^t_{i,j}(n_1,n_2)$ that are independent of $t$. Indeed, in~\cite{vandijk11inbook} a lot of efforts have been spent on finding those bounds. However, this is not our focus in this paper. We would like to derive the error bound explicitly for the proposed inhomogeneous perturbation framework. Hence it is assumed that there exist functions $B_1(n_1,0)$, $B_2(0,n_2)$ and $B_3$ for which
\begin{align*}
\left| D^t_{i,j}(n_1,0) \right| \le B_1(n_1,0), & \forall (i,j) \in N_1,\\
\left| D^t_{i,j}(0,n_2) \right| \le B_2(0,n_2), & \forall (i,j) \in N_2, \\
\label{eq:bounds_bias_terms}
\left| D^t_{i,j}(0,0) \right| \le B_3, & \forall (i,j) \in N_3, \tag{$\star$}
\end{align*}
and that $B_1(n_1,0)$, $B_2(0,n_2)$ are of the following form,
\begin{align*}
B_1(n_1,0) = \sum_{m=0}^M b_{1,m} n_1^{m}, \qquad B_1(0,n_2) = \sum_{m=0}^M b_{2,m} n_2^{m}.
\end{align*}
So far, the preliminaries for deriving the error bound have been presented. In the next subsection, we will obtain an explicit expression for the error bound.

\subsection{Derivation of the error bound}

Consider a state $(n_1,0)$ on the horizontal axis, by~\eqref{eq:bounds_bias_terms} it holds that
{\small
\begin{eqnarray*}
\left| \sum_{(i,j) \in N_1} \left[ \bar{h}_{i,j}(n_1,0) - h_{i,j} \right] D^t_{i,j}(n_1,0) \right| \le \sum_{(i,j) \in N_1} \left| \bar{h}_{i,j}(n_1,0) - h_{i,j} \right| B_1(n_1,0).
\end{eqnarray*}
}
Hence, $G_1(n_1,0)$ will be taken to be
\begin{eqnarray*}
G_1(n_1,0) = \sum_{\footnotesize (i,j) \in N_1} \left| \bar{h}_{i,j}(n_1,0) - h_{i,j} \right| B_1(n_1,0). 
\end{eqnarray*}
Since the transition rates are obtained by the single direction inhomogeneous construction, only $h_{-1,0}(n_1,0)$ depends on $n_1$. Denote by
\begin{eqnarray*}
\delta_h = \sum_{i=-1}^1 \left| \bar{h}_{i,1} - h_{i,1} \right|, \qquad \delta_v = \sum_{j=-1}^1 \left| \bar{v}_{1,j} - v_{1,j} \right|, \qquad \delta_r = \sum_{(i,j)\in N_3} \left| \bar{r}_{1,j} - r_{i,j} \right|. 
\end{eqnarray*}
Then the sum above can be split as
\begin{eqnarray}
\label{eq:g_function_horizontal}
G_1(n_1,0) = B_1(n_1,0) \left( \delta_h + \left| \bar{h}_{1,0} - h_{1,0} \right| + \left| \bar{h}_{-1,0}(n_1,0) - h_{-1,0} \right| \right).
\end{eqnarray}
In Equation~\eqref{eq:g_function_horizontal}, the absolute value $| \bar{h}_{-1,0}(n_1,0) - h_{-1,0} |$ makes it more difficult to calculate the summation $\sum_{n_1=1}^\infty \bar{\pi}(n_1,0) G_1(n_1,0)$. Recall from Theorem~\ref{thm:inhomo_transition_above_threshold_postivie_coefficient} that it is possible to choose $\bar{h}_{1,0}$ such that for any $n_1=1,2,\dots$, $\bar{h}_{-1,0}(n_1,0) \ge h_{-1,0}$. This will get rid of the absolute value and make error bound calculation feasible. Therefore, the following theorem gives an explicit expression for the error bound. 

\begin{theorem}
\label{thm:error_bound_result}
Let $R$ be a continuous-time random walk and $\bar{\pi}$ be a sum of geometric terms satisfying the interior balance equation. Suppose that the transition rates of $\bar{R}$ are obtained by the single direction inhomogeneous construction. Moreover, suppose that $\bar{h}_{1,0}$ and $\bar{v}_{0,1}$ are chosen such that 
\begin{eqnarray*}
\bar{h}_{-1,0}(n_1,0) \ge h_{-1,0}, \qquad \bar{v}_{0,-1}(0,n_2) \ge v_{0,-1}, \qquad \forall n_1,n_2 = 1,2,\dots. 
\end{eqnarray*}
Then,  
\begin{eqnarray}
\label{eq:error_bound_result}
\left| \bar{\mathcal{F}} - \mathcal{F} \right| \leq g^{(1)} + g^{(2)} + g^{(3)},
\end{eqnarray}
where
{\footnotesize
\begin{align*}
g^{(1)} =& \left( \delta_h + \left| \bar{h}_{1,0} - h_{1,0} \right| - h_{-1,0} \right)\sum_{k=1}^K \sum_{m=0}^M c_k b_{1,m} \mathrm{Li}_{-m}(\rho_k) + \bar{h}_{1,0}\sum_{k=1}^K \sum_{m=0}^M c_k\rho_k^{-1} b_{1,m} \mathrm{Li}_{-m}(\rho_k) \\
& - \sum_{k=1}^K\sum_{m=0}^M c_k\alpha_k b_{1,m}(1-\rho_k)^{-1}\mathrm{Li}_{-m}(\rho_k), \\
g^{(2)} =& \left( \delta_v + \left| \bar{v}_{0,1} - v_{0,1} \right|- v_{0,-1} \right)\sum_{k=1}^K \sum_{m=0}^M c_k b_{2,m} \mathrm{Li}_{-m}(\sigma_k) + 
\bar{v}_{0,1}\sum_{k=1}^K \sum_{m=0}^M c_k\sigma_k^{-1} b_{2,m} \mathrm{Li}_{-m}(\sigma_k) \\
& - \sum_{k=1}^K\sum_{m=0}^M c_k\beta_k b_{2,m}(1-\sigma_k)^{-1}\mathrm{Li}_{-m}(\sigma_k), \\
g^{(3)} =& \delta_r B_3\sum_{k=1}^K c_k.
\end{align*}
}
\end{theorem}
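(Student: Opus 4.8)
The plan is to feed the ingredients assembled just before the theorem into the Markov reward bound of Theorem~\ref{thm:markov_reward_result} and then evaluate the three resulting sums in closed form. Recall that the excerpt has already produced the expression
\begin{eqnarray*}
G_1(n_1,0) = B_1(n_1,0) \left( \delta_h + \left| \bar{h}_{1,0} - h_{1,0} \right| + \left| \bar{h}_{-1,0}(n_1,0) - h_{-1,0} \right| \right),
\end{eqnarray*}
together with the analogous $G_2(0,n_2)$ and the constant $G_3 = \delta_r B_3$. So the first step is simply to record that these choices of $G_1,G_2,G_3$ satisfy the hypotheses \eqref{eq:construct_error_horizontal}--\eqref{eq:construct_error_origin} of Theorem~\ref{thm:markov_reward_result}, using the assumed affine bounds in \eqref{eq:bounds_bias_terms} on the bias terms. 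Plugging into the conclusion of that theorem gives
\begin{eqnarray*}
\left| \bar{\mathcal{F}} - \mathcal{F} \right| \le \sum_{n_1=1}^\infty \bar{\pi}(n_1,0) G_1(n_1,0) + \sum_{n_2=1}^\infty \bar{\pi}(0,n_2) G_2(0,n_2) + \bar{\pi}(0,0) G_3,
\end{eqnarray*}
and it remains to identify each of the three summands with $g^{(1)}$, $g^{(2)}$, $g^{(3)}$ respectively.

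Next I would remove the absolute value around $\bar{h}_{-1,0}(n_1,0) - h_{-1,0}$. This is exactly where the theorem's hypothesis $\bar{h}_{-1,0}(n_1,0) \ge h_{-1,0}$ (available by the threshold choice justified in Theorem~\ref{thm:inhomo_transition_above_threshold_postivie_coefficient}) is used: under it the term equals $\bar{h}_{-1,0}(n_1,0) - h_{-1,0}$ with no modulus, so $G_1(n_1,0)$ becomes a genuine linear combination of $B_1(n_1,0)$ and $B_1(n_1,0)\,\bar{h}_{-1,0}(n_1,0)$. Now substitute $\bar{\pi}(n_1,0) = \sum_{k} c_k \rho_k^{n_1}$ and the explicit formula for $\bar{h}_{-1,0}(n_1,0)$ from Theorem~\ref{thm:perturb_transition_rates}. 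The key simplification is that the denominator $\sum_k c_k \rho_k^{n_1}$ appearing in $\bar{h}_{-1,0}(n_1,0)$ cancels against the factor $\bar{\pi}(n_1,0)$ in the product $\bar{\pi}(n_1,0)\,\bar{h}_{-1,0}(n_1,0)$, leaving clean sums of the form $\sum_{n_1\ge1}\rho_k^{n_1} n_1^m$ with no rational function of $n_1$ remaining. This cancellation is what makes the whole computation tractable and is the conceptual heart of the derivation.

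The final step is to evaluate these sums against the polynomial $B_1(n_1,0) = \sum_{m=0}^M b_{1,m} n_1^m$. Interchanging the finite sums over $k,m$ with the infinite sum over $n_1$, I would use the identity $\sum_{n_1=1}^\infty n_1^m \rho_k^{n_1} = \mathrm{Li}_{-m}(\rho_k)$, which holds for $|\rho_k|<1$ and is precisely why the polylogarithm was introduced in Section~\ref{sec:model_and_problem_formulation}. The three distinct building blocks of $g^{(1)}$ correspond to the three additive pieces of $B_1(n_1,0)\,(\delta_h + |\bar{h}_{1,0}-h_{1,0}| - h_{-1,0} + \bar{h}_{-1,0}(n_1,0))$: the constant coefficient multiplies $\sum_k\sum_m c_k b_{1,m}\mathrm{Li}_{-m}(\rho_k)$; the $\rho_k^{-1}\bar{h}_{1,0}$ part of $\bar{h}_{-1,0}$ produces the $\bar{h}_{1,0}\sum_k\sum_m c_k\rho_k^{-1} b_{1,m}\mathrm{Li}_{-m}(\rho_k)$ term (after the shift $\rho_k^{n_1-1}=\rho_k^{-1}\rho_k^{n_1}$); and the $-\alpha_k(1-\rho_k)^{-1}$ part yields the last summand of $g^{(1)}$. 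The computation for $g^{(2)}$ is identical with $(\rho_k,\bar{h}_{1,0},\alpha_k,b_{1,m},\delta_h)$ replaced by $(\sigma_k,\bar{v}_{0,1},\beta_k,b_{2,m},\delta_v)$, and $g^{(3)} = \bar{\pi}(0,0)\,\delta_r B_3 = \delta_r B_3 \sum_k c_k$ is immediate since $\bar{\pi}(0,0)=\sum_k c_k$.

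The main obstacle is bookkeeping rather than any deep difficulty: one must track the shift $n_1\mapsto n_1-1$ correctly so the $\rho_k^{-1}$ factors land on the right term, and one must be careful that the cancellation of the denominator is exact for every $n_1$ simultaneously (not merely asymptotically), so that no residual rational-function terms are dropped. Everything else is a routine interchange of finite and convergent infinite sums followed by the polylogarithm identity; the convergence of these sums is guaranteed by $(\rho_k,\sigma_k)\in(0,1)^2$, which keeps every $\mathrm{Li}_{-m}(\rho_k)$ and $\mathrm{Li}_{-m}(\sigma_k)$ finite.
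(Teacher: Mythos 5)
Your proposal is correct and follows essentially the same route as the paper's proof: apply Theorem~\ref{thm:markov_reward_result} with the $G_1,G_2,G_3$ built from the bias-term bounds, drop the absolute value via the hypothesis $\bar{h}_{-1,0}(n_1,0)\ge h_{-1,0}$, exploit the exact cancellation of the denominator $\sum_k c_k\rho_k^{n_1}$ against $\bar{\pi}(n_1,0)$ (the paper phrases this as the identity $\bar{\pi}(n_1,0)\bar{h}_{-1,0}(n_1,0)-\bar{\pi}(n_1-1,0)\bar{h}_{1,0}=-\sum_k c_k\alpha_k(1-\rho_k)^{-1}\rho_k^{n_1}$), and finish with the polylogarithm identity. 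The key simplification you identify is exactly the one the paper uses.
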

\begin{proof}
From Theorem~\ref{thm:markov_reward_result}, it is seen that a calculation of the following,
\begin{eqnarray*}
g^{(1)} = \sum_{n_1=1}^\infty \bar{\pi}(n_1,0)G_1(n_1,0), \quad g^{(2)} = \sum_{n_2=1}^\infty \bar{\pi}(0,n_2)G_2(0,n_2), \quad g^{(3)} = \bar{\pi}(0,0)G_3,
\end{eqnarray*}
is sufficient for the error bound. In the following proof, only the calculation for $g^{(1)}$ is given and the rest follows the same fashion.  

Since the choice of $\bar{h}_{1,0}$ ensures that $\bar{h}_{-1,0}(n_1,0) \ge h_{-1,0}$ for all $n_1=1,2,\dots$, using Equation~\eqref{eq:g_function_horizontal} we get that 
{\footnotesize
\begin{align*}
g^{(1)} &= \sum_{n_1 = 1}^{\infty} \bar{\pi}(n_1,0) \cdot \left[ \left( \delta_h + \left| \bar{h}_{1,0} - h_{1,0} \right| + \bar{h}_{-1,0}(n_1,0) - h_{-1,0} \right)\cdot B(n_1,0) \right], \\
&= \left( \delta_h + \left| \bar{h}_{1,0} - h_{1,0} \right| - h_{-1,0} \right) \sum_{n_1=1}^\infty \bar{\pi}(n_1,0)B_1(n_1,0) + \sum_{n_1=1}^\infty \bar{h}_{-1,0}(n_1,0)\bar{\pi}(n_1,0)B_1(n_1,0),
\end{align*}
}
where in the last step a split between constant terms and inhomogeneous term is used again. For the first summation, a direct calculation gives that 
\begin{eqnarray*}
\sum_{n_1=1}^\infty \bar{\pi}(n_1,0)B_1(n_1,0) =  \sum_{k=1}^K \sum_{m=0}^M\sum_{n_1=1}^\infty c_k b_{1,m}n_1^{m}\rho_k^{n_1} = \sum_{k=1}^K \sum_{m=0}^M c_k b_{1,m} \mathrm{Li}_{-m}(\rho_k). 
\end{eqnarray*}
For the second summation, from the construction of $\bar{h}_{-1,0}(n_1,0)$ we have
\begin{eqnarray*}
\bar{\pi}(n_1,0)\bar{h}_{-1,0}(n_1,0) - \bar{\pi}(n_1-1,0)\bar{h}_{1,0} = -\sum_{k=1}^K c_k \alpha_k(1-\rho_k)^{-1} \rho_k^{n_1}.
\end{eqnarray*}
Multiplying by $B(n_1,0)$ on both sides and summing up over $n_1$, we have
\begin{align*}
& \sum_{n_1=1}^{\infty} \bar{\pi}(n_1,0)\bar{h}_{-1,0}(n_1,0)B_1(n_1,0) \\ 
=& \sum_{n_1=1}^{\infty}\sum_{k=1}^K \bar{h}_{1,0} c_k\rho_k^{n_1-1} \sum_{m=0}^M b_{1,m} n_1^m - \sum_{n_1=1}^\infty \sum_{k=1}^K \sum_{m=0}^M c_k\alpha_k b_{1,m}n_1^m \rho_k^{n_1}(1-\rho_k)^{-1}. \\
\end{align*}
Then, it is easy to verify that the conclusion holds.
\end{proof}

In particular, if the bias terms are bounded by constants, \ie $M=0$, we can obtain an expression without the polylogarithm function, which is shown in the following corollary. 

\begin{corollary}
\label{cor:error_bound_degree_0}
Suppose that all the conditions in Theorem~\ref{thm:error_bound_result} hold. In addition, assume that $\left| D^t_{i,j}(n_1,0) \right| \le \mathcal{B}_1$ for all $(i,j) \in N_1$, $\left| D^t_{i,j}(0,n_2) \right| \le \mathcal{B}_2$ for all $(i,j) \in N_2$, and $\left| D^t_{i,j}(0,0) \right| \le \mathcal{B}_3$ for all $(i,j) \in N_3$. Then,  
\begin{eqnarray}
\left| \bar{\mathcal{F}} - \mathcal{F} \right| \leq g^{(1)} + g^{(2)} + g^{(3)},
\end{eqnarray}
where
\begin{align*}
g^{(1)} =& \left( \delta_h + \left| \bar{h}_{1,0} - h_{1,0} \right| - h_{-1,0} \right)\mathcal{B}_1\sum_{k=1}^K c_k\rho_k(1-\rho_k)^{-1} + \\ 
& \bar{h}_{1,0}\mathcal{B}_1\sum_{k=1}^K c_k(1-\rho_k)^{-1} - \mathcal{B}_1\sum_{k=1}^K c_k\alpha_k\rho_k(1-\rho_k)^{-2}, \\
g^{(2)} =& \left( q_v + \left| \bar{v}_{0,1} - v_{0,1} \right|- v_{0,-1} \right)\mathcal{B}_2\sum_{k=1}^K c_k\sigma_k(1-\sigma_k)^{-1} + \\
& \bar{v}_{0,1}\mathcal{B}_2\sum_{k=1}^K c_k(1-\sigma_k)^{-1} - \mathcal{B}_2\sum_{k=1}^K c_k\beta_k\sigma_k(1-\sigma_k)^{-2}, \\
g^{(3)} =& q_r \mathcal{B}_3\sum_{k=1}^K c_k.
\end{align*}
\end{corollary}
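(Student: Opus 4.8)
The plan is to derive the corollary as a direct specialization of Theorem~\ref{thm:error_bound_result} to the case $M = 0$. Under the hypothesis that the bias terms are bounded by constants, the affine bounds $B_1(n_1,0) = \sum_{m=0}^M b_{1,m} n_1^m$ and $B_2(0,n_2) = \sum_{m=0}^M b_{2,m} n_2^m$ reduce to their constant terms, so I would set $b_{1,0} = \mathcal{B}_1$, $b_{2,0} = \mathcal{B}_2$ and $B_3 = \mathcal{B}_3$, after which every inner sum $\sum_{m=0}^M$ appearing in $g^{(1)}$, $g^{(2)}$ and $g^{(3)}$ contains only the term $m = 0$.

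The one computation that is actually needed is the value of the polylogarithm at order zero. From the definition $\mathrm{Li}_s(z) = \sum_{k=1}^\infty z^k/k^s$ given in Section~\ref{sec:model_and_problem_formulation}, putting $s = 0$ collapses it to the geometric series $\mathrm{Li}_0(z) = \sum_{k=1}^\infty z^k = z/(1-z)$, which converges because $\rho_k,\sigma_k \in (0,1)$. I would then substitute $\mathrm{Li}_0(\rho_k) = \rho_k(1-\rho_k)^{-1}$ into each of the three terms of $g^{(1)}$ from Theorem~\ref{thm:error_bound_result}. The first term becomes $(\delta_h + |\bar{h}_{1,0}-h_{1,0}| - h_{-1,0})\,\mathcal{B}_1 \sum_{k=1}^K c_k \rho_k(1-\rho_k)^{-1}$; in the second term the prefactor $\rho_k^{-1}$ cancels the $\rho_k$ coming from the polylogarithm, leaving $\bar{h}_{1,0}\,\mathcal{B}_1\sum_{k=1}^K c_k(1-\rho_k)^{-1}$; and in the third term the additional factor $(1-\rho_k)^{-1}$ combines with $\mathrm{Li}_0(\rho_k)$ to produce $\rho_k(1-\rho_k)^{-2}$, giving $-\mathcal{B}_1\sum_{k=1}^K c_k\alpha_k\rho_k(1-\rho_k)^{-2}$.

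The expression for $g^{(2)}$ follows by the identical substitution with $\rho_k$, $\alpha_k$, $\bar{h}_{1,0}$, $\delta_h$, $h_{-1,0}$ replaced by $\sigma_k$, $\beta_k$, $\bar{v}_{0,1}$, $\delta_v$, $v_{0,-1}$ respectively, and $g^{(3)} = \delta_r\,\mathcal{B}_3\sum_{k=1}^K c_k$ is immediate from the corresponding line of Theorem~\ref{thm:error_bound_result} once $B_3 = \mathcal{B}_3$ is inserted. Since this is a pure specialization of an already-proved result, there is no genuine obstacle; the only thing requiring care is the bookkeeping of the factors $\rho_k$ and $(1-\rho_k)^{-1}$, so that the closed form $z/(1-z)$ is matched correctly against each of the three terms and against their analogues in $\sigma_k$.
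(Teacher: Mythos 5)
Your proposal is correct and matches the paper's intended justification exactly: the paper states the corollary without a written proof precisely because it is the specialization $M=0$ of Theorem~\ref{thm:error_bound_result}, using $\mathrm{Li}_0(z) = z/(1-z)$ and the substitutions $b_{1,0} = \mathcal{B}_1$, $b_{2,0} = \mathcal{B}_2$, $B_3 = \mathcal{B}_3$, with the same cancellation of $\rho_k^{-1}$ in the second term and the combination into $(1-\rho_k)^{-2}$ in the third. Your bookkeeping also implicitly corrects the paper's typographical slips ($q_v$ and $q_r$ in the corollary statement should read $\delta_v$ and $\delta_r$), which is consistent with the definitions in Section~\ref{sec:error_bound}.
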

This expression will be used in the next section for the numerical calculation of the error bound. 

In this section, it is seen that after obtaining the transition rates of $\bar{R}$ using the single direction inhomogeneous construction, an explicit expression for the error bound can be derived. In the next section, the inhomogeneous perturbation will be applied in a numerical example and the error bound result will be plotted.

\section{Numerical experiments: random walk with joint departures}
\label{sec:numerical_experiment}

Consider a random walk with joint departures in the quarter plane. This model often appears in communication networks and has been studied in~\cite{goseling2013energy}. Homogeneous perturbation for this model is applied in~\cite{goseling2016linear} and bounds on the stationary performance are obtained using a linear programming approach. Here we apply the single direction inhomogeneous perturbation on this model and derive the error bound, in comparison with the one obtained in~\cite{goseling2016linear}. 

The model describes two queues with independent Poisson arrivals and simultaneous departures from both queues. When one of the queues is empty, the other one serves at a lower rate. A general introduction on the model is given in~\cite{goseling2016linear}. In this section, we consider a symmetric scenario, which means that both queues serve at the same rate in case the other one is empty. Thus, the non-zero transition rates are, 
\begin{align*}
& q_{1,0} = \lambda, \qquad q_{0,1} = \lambda, \qquad q_{-1,-1} = \mu, \\
& h_{1,0} = \lambda, \qquad h_{0,1} = \lambda, \qquad h_{-1,0} = \mu^*, \\
& v_{1,0} = \lambda, \qquad v_{0,1} = \lambda, \qquad v_{0,-1} = \mu^*.
\end{align*}
Moreover, assume that $2\lambda + \mu = 1$, and $\lambda < \mu^* < \mu$. The transition structure of this random walk is given in Figure~\ref{fig:transition_rw_joint}. 

\begin{figure}[htb!]
\centering
\begin{tikzpicture}[scale = 0.7]
\draw [->, >=stealth'] (0,0) -- (0,8) node[above, thick] {$n_2$};
\draw [->, >=stealth'] (0,0) -- (10,0) node[right, thick] {$n_1$};

\draw [->, very thick] (7,5) -- ++(-1,-1) node[below left, thick] {\footnotesize $\mu$};
\draw [->, very thick] (7,5) -- ++(0,1) node[above, thick] {\footnotesize $\lambda$}; 	
\draw [->, very thick] (7,5) -- ++(1,0) node[right, thick] {\footnotesize $\lambda$};

\draw [->, very thick] (7,0) -- ++(0,1) node[above, thick] {\footnotesize $\lambda$};
\draw [->, very thick] (7,0) -- ++(1,0) node[below right, thick] {\footnotesize $\lambda$};
\draw [->, very thick] (7,0) -- ++(-1,0) node[below left, thick] {\footnotesize $\mu^*$};

\draw [->, very thick] (0,5) -- ++(1,0) node[right, thick] {\footnotesize $\lambda$};
\draw [->, very thick] (0,5) -- ++(0,1) node[above left, thick] {\footnotesize $\lambda$};
\draw [->, very thick] (0,5) -- ++(0,-1) node[below left, thick] {\footnotesize $\mu^*$};

\draw [->, very thick] (0,0) -- ++(1,0) node[below right, thick] {\footnotesize $\lambda$};
\draw [->, very thick] (0,0) -- ++(0,1) node[above left, thick] {\footnotesize $\lambda$};
\end{tikzpicture}
\caption{Random walk with joint departures}
\label{fig:transition_rw_joint}
\end{figure}
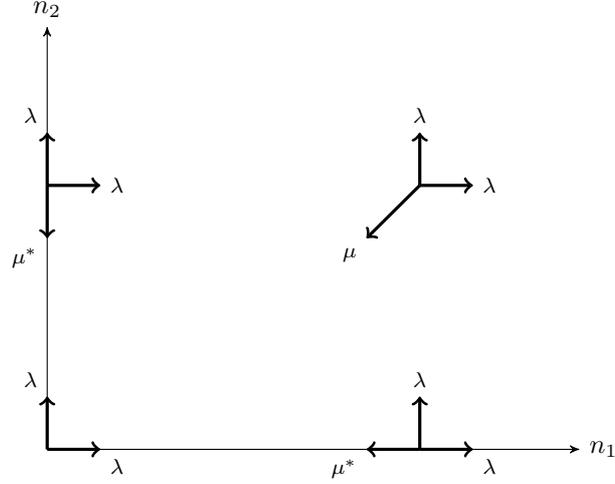

For the perturbed random walk $\bar{R}$, in Sub-section~\ref{ssec:rw_joint_homo} we describe the homogeneous perturbation considered in~\cite{goseling2016linear}. Then in Sub-section~\ref{ssec:rw_joint_inhomo}, the inhomogeneous perturbation scheme is given and necessary parameters are specified. 

\subsection{Homogeneous perturbation}
\label{ssec:rw_joint_homo}

In this case, consider homogeneous rates on the axes for $\bar{R}$ as well. The homogeneous perturbation is considered in~\cite{goseling2016linear} as an example and the given $\bar{\pi}$ has a product form, \ie
\begin{eqnarray*}
\bar{\pi}(n_1,n_2) = (1-\rho)(1-\sigma) \rho^{n_1}\sigma^{n_2}, \qquad (n_1,n_2) \in S. 
\end{eqnarray*}
More precisely, let
\begin{eqnarray*}
\rho = \sigma = \frac{-\mu + \sqrt{\mu^2+8\lambda\mu}}{2\mu}
\end{eqnarray*}
Then for the transition rates of $\bar{R}$, the service rates on the axes are
\begin{eqnarray*}
\bar{h}_{-1,0} = \mu/2, \qquad \bar{v}_{0,-1} = \mu/2,
\end{eqnarray*}
and the other rates are the same as those of $R$. 
In Figure~\ref{fig:geometric_term_rw_joint_homo}, all the curves for $R$ are plotted and $(\rho,\sigma)$ is marked with solid square. We consider that $\mu^* = 0.3\mu$. In addition, the curves of $\bar{R}$, $\bar{H}$ and $\bar{V}$, are also plotted so as to see how the perturbation modifies the curves. 

\begin{figure}[htb!]
\centering
\begin{tikzpicture}[scale=1]
\begin{axis}[
width=.7\textwidth,
xmax = 2, xmin = 0,
ymax = 2, ymin = 0,
font = \scriptsize,
legend style={
cells={anchor=west},
legend pos=north east,
font = \scriptsize,
}
]

\addplot[
line width=.3mm,color=red,
mark=none
]
table[
x index=0, y index=1, col sep = comma
]
{allcurves_data_int.csv};
\addlegendentry{$Q$};

\addplot+[
forget plot,
line width=.3mm,color=red,
mark=none
]
table[
x index=0, y index=2, col sep = comma
]
{allcurves_data_int.csv};
	
\addplot[
line width=.3mm,color=blue, 
mark=diamond, mark repeat = 10, mark phase = 5, mark size = 0.8mm
]
table[
x index=0, y index=1, col sep = comma
]
{allcurves_data_hor.csv};
\addlegendentry{$H$};

\addplot[
line width=.3mm,color=green,  
mark=o, mark repeat = 10, mark phase = 5, mark size = 0.8mm
]
table[
x index=1, y index=0, col sep = comma
]
{allcurves_data_ver.csv};
\addlegendentry{$V$};

\addplot[
line width=.3mm,color=blue, dashed, 
mark=diamond, mark repeat = 10, mark phase = 5, mark size = 0.8mm, mark options = {solid}
]
table[
x index=0, y index=1, col sep = comma
]
{allcurves_data_prod_hor.csv};
\addlegendentry{$\bar{H}$};

\addplot[
line width=.3mm,color=green, dashed,
mark=o, mark repeat = 10, mark phase = 5, mark size = 0.8mm, mark options = {solid}
]
table[
x index=1, y index=0, col sep = comma
]
{allcurves_data_prod_ver.csv};
\addlegendentry{$\bar{V}$};

\addplot[
mark=none
]
coordinates {(0,1) (1,1) (1,0)};

\addplot[
mark=square*, mark size = 3,
color=darkblue,
dashed
]
coordinates {(0.4574,0.4574)};

\end{axis}
\end{tikzpicture}
\caption{Characteristic curves of $R$ and $\bar{R}$: $\mu^*=0.3\mu$. }
\label{fig:geometric_term_rw_joint_homo}
\end{figure}
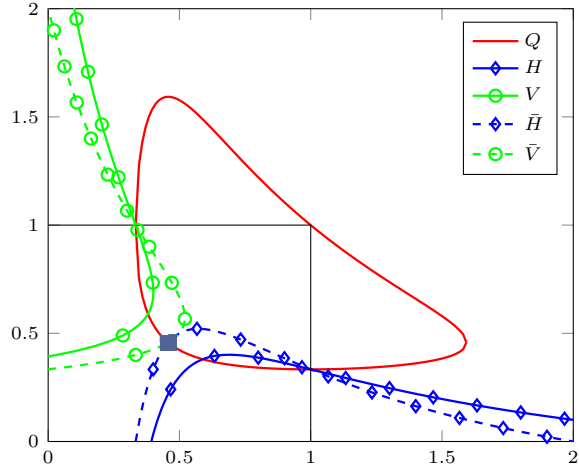

In Figure~\ref{fig:geometric_term_rw_joint_homo}, the curves of $R$ are in solid lines and those of $\bar{R}$ are plotted in dashed lines. Besides, the curve $H$ is marked with diamonds and $V$ is marked with circles. Indeed, it can be seen that $(\rho,\sigma)$ lies on the curves $Q$, $\bar{H}$ and $\bar{V}$ hence $\bar{\pi}$ is the stationary probability distribution of $\bar{R}$. 

\subsection{Inhomogeneous perturbation}
\label{ssec:rw_joint_inhomo}

For the inhomogeneous perturbed random walk $\bar{R}$, consider
\begin{eqnarray*}
\bar{\pi}(n_1,n_2) = c_1 \rho_1^{n_1}\sigma_1^{n_2} + c_2 \rho_2^{n_1}\sigma_2^{n_2},
\end{eqnarray*}
where $(\rho_1,\sigma_1), (\rho_2,\sigma_2) \in (0,1)^2$ are the unique points for which
\begin{eqnarray*}
(\rho_1,\sigma_1) = Q \cap H, \qquad (\rho_2,\sigma_2) = Q \cap V.
\end{eqnarray*}
Thus, besides the interior balance equations, $(\rho_1,\sigma_1)$ and $(\rho_2,\sigma_2)$ satisfy the horizontal and vertical balance equations, respectively. In Figure~\ref{fig:geometric_term_rw_joint_inhomo_0.3mu} and Figure~\ref{fig:geometric_term_rw_joint_inhomo_0.7mu}, all the curves for $R$ and the geometric terms are plotted, for cases $\mu^*=0.3\mu$ and $\mu^*=0.7\mu$. The geometric terms in $\bar{\pi}$ are marked with solid square. 

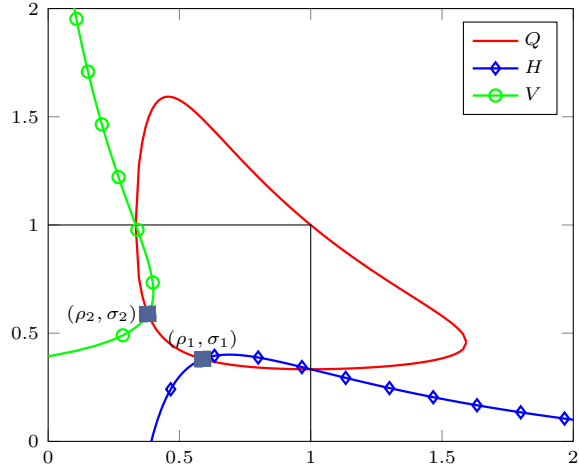
\begin{figure}[htb!]
\centering
\begin{tikzpicture}[scale=1]
\begin{axis}[
width=.7\textwidth,
xmax = 2, xmin = 0,
ymax = 2, ymin = 0,
font = \scriptsize,
legend style={
	cells={anchor=west},
	legend pos=north east,
	font = \scriptsize,
}
]

\addplot[
line width=.3mm,color=red,
mark=none
]
table[
x index=0, y index=1, col sep = comma
]
{allcurves_data_int.csv};
\addlegendentry{$Q$};

\addplot+[
forget plot,
line width=.3mm,color=red,
mark=none
]
table[
x index=0, y index=2, col sep = comma
]
{allcurves_data_int.csv};

\addplot[
line width=.3mm,color=blue, 
mark=diamond, mark repeat = 10, mark phase = 5, mark size = 0.8mm
]
table[
x index=0, y index=1, col sep = comma
]
{allcurves_data_hor.csv};
\addlegendentry{$H$};

\addplot[
line width=.3mm,color=green,  
mark=o, mark repeat = 10, mark phase = 5, mark size = 0.8mm
]
table[
x index=1, y index=0, col sep = comma
]
{allcurves_data_ver.csv};
\addlegendentry{$V$};

\addplot[
mark=none
]
coordinates {(0,1) (1,1) (1,0)};

\addplot[
mark=square*, mark size = 3,
color=darkblue,
dashed
]
coordinates {(0.5887,0.3802)} node[above,black, very thick]{$(\rho_1,\sigma_1)$};

\addplot[
mark=square*, mark size = 3, 
color=darkblue,
dashed
]
coordinates {(0.3802,0.5887)} node[left,black,very thick]{$(\rho_2,\sigma_2)$};

\end{axis}
\end{tikzpicture}
\caption{Characteristic curves for $R$ and chosen geometric terms: $\mu^*=0.3\mu$. }
\label{fig:geometric_term_rw_joint_inhomo_0.3mu}
\end{figure}
\begin{figure}[htb!]
\centering
\begin{tikzpicture}[scale=1]
\begin{axis}[
width=.7\textwidth,
xmax = 2, xmin = 0,
ymax = 2, ymin = 0,
font = \scriptsize,
legend style={
	cells={anchor=west},
	legend pos=north east,
	font = \scriptsize,
}
]

\addplot[
line width=.3mm,color=red,
mark=none
]
table[
x index=0, y index=1, col sep = comma
]
{allcurves_data_int.csv};
\addlegendentry{$Q$};

\addplot+[
forget plot,
line width=.3mm,color=red,
mark=none
]
table[
x index=0, y index=2, col sep = comma
]
{allcurves_data_int.csv};

\addplot[
line width=.3mm,color=blue, 
mark=diamond, mark repeat = 10, mark phase = 5, mark size = 0.8mm
]
table[
x index=0, y index=1, col sep = comma
]
{allcurves_data_rev_hor.csv};
\addlegendentry{$H$};

\addplot[
line width=.3mm,color=green,  
mark=o, mark repeat = 10, mark phase = 5, mark size = 0.8mm
]
table[
x index=1, y index=0, col sep = comma
]
{allcurves_data_rev_ver.csv};
\addlegendentry{$V$};

\addplot[
mark=none
]
coordinates {(0,1) (1,1) (1,0)};

\addplot[
mark=square*, mark size = 3,
color=darkblue,
dashed
]
coordinates {(0.3361,0.8764)} node[left,black,very thick]{$(\rho_1,\sigma_1)$};

\addplot[
mark=square*, mark size = 3, 
color=darkblue,
dashed
]
coordinates {(0.8764,0.3361)} node[below,black,very thick]{$(\rho_2,\sigma_2)$};

\end{axis}
\end{tikzpicture}
\caption{Characteristic curves for $R$ and chosen geometric terms: $\mu^*=0.7\mu$. }
\label{fig:geometric_term_rw_joint_inhomo_0.7mu}
\end{figure}

For the case $\mu^*=0.3\mu$, it can be observed that $(\rho_*,\sigma_*) = (\rho_1,\sigma_1)$, and $(\rho_{**},\sigma_{**}) = (\rho_2,\sigma_2)$. Let $\bar{h}_{1,0} = \bar{v}_{0,1} = \lambda$, and one can verify that indeed the conditions in Theorem~\ref{thm:error_bound_result} hold. Moreover, since $\rho_1 = \rho_*, \sigma_2 = \sigma_{**}$, according to Theorem~\ref{thm:limiting_probability} the limiting rates of $\bar{R}$ are equal to those of $R$, \ie
\begin{eqnarray*}
\lim_{n_1\rightarrow \infty} \bar{h}_{-1,0}(n_1,0) = \mu^*, \qquad \lim_{n_2\rightarrow \infty} \bar{v}_{0,-1}(0,n_2) = \mu^*. 
\end{eqnarray*}
In short, for the perturbed random walk $\bar{R}$, the inhomogeneous rates are approaching the original ones on the axes. However, for the case $\mu^*=0.7\mu$, $(\rho_*,\sigma_*) = (\rho_2,\sigma_2)$, and $(\rho_{**},\sigma_{**}) = (\rho_1,\sigma_1)$. Then the limiting rates are, 
\begin{eqnarray*}
\lim_{n_1\rightarrow \infty} \bar{h}_{-1,0}(n_1,0) &=& \rho_2^{-1}\lambda - (1-\rho_2)^{-1} \left( \lambda - \rho_2\sigma_2\mu \right), \\ 
\lim_{n_2\rightarrow \infty} \bar{v}_{0,-1}(0,n_2) &=& \sigma_1^{-1}\lambda - (1-\sigma_1)^{-1} \left( \lambda - \rho_1\sigma_1\mu \right),
\end{eqnarray*}
which differ from $\mu^*$. 

In the next subsection, the numerical results for error bounds in various cases are presented. 

\subsection{Numerical result for error bound}

In this part, we consider two stationary performances, the expected probability that the system is empty and the expected number of jobs in the first queue. The first performance is discussed in~\cite{goseling2016linear} and bounds on the bias terms are given explicitly there. The second performance is considered in~\cite{goseling2013energy}. For the bias terms, they are established but no explicit bounds are given. Later we shall explain that bounds on the bias terms can be obtained through a linear program. 
 
\subsubsection*{The probability of an empty system}

Consider the expected probability that the system is empty. In this case, the reward function is $F(n_1,n_2) = \mathbf{1}\{ (n_1,n_2)=(0,0) \}$. In~\cite{goseling2016linear}, it is shown that 
\begin{eqnarray*}
\left| D^t_{i,j}(n_1,n_2) \right| \leq \max\left\{ \frac{1}{\mu^*}, \frac{\mu-\mu^*}{\mu\mu^*} \right\},
\end{eqnarray*}
for all $(i,j)$ and $(n_1,n_2)$. Moreover, it is given that
\begin{eqnarray*}
\left| \bar{\mathcal{F}} - \mathcal{F} \right| \le 2\rho(1-\rho)\frac{(\mu/2-\mu^*)(\mu-\mu^*)}{\mu\mu^*}. 
\end{eqnarray*}

First, fix that $\mu^* = 0.3\mu$ and consider various loads, \ie various values for $\lambda/\mu$. The error bound results for both homogeneous and inhomogeneous perturbation are shown in Figure~\ref{fig:error_bound_load_empty_probability}, where the homogeneous one is obtained by the formula above and the inhomogeneous one is given by Theorem~\ref{thm:error_bound_result}. 

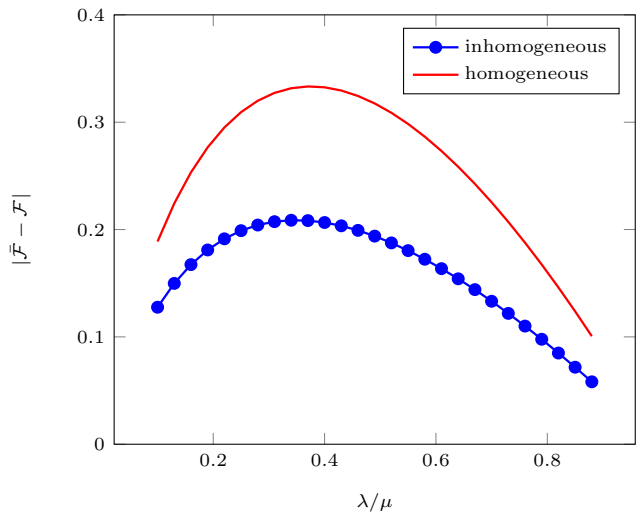
\begin{figure}[!ht]
\centering
\begin{tikzpicture}[scale=1]
\begin{axis}[
xlabel=$\lambda/\mu $,ylabel=$| \bar{\mathcal{F}} - \mathcal{F} |$, 
ymin = 0, ymax = 0.4,
font=\scriptsize,
legend style={
	cells={anchor=west},
	legend pos=north east,
	font=\scriptsize,
}
]
	
\addplot[
mark=*,line width=.3mm,color=blue
]
table[
header=false, x index=0, y index=1, col sep = comma
]
{error_bound_load_empty_probability.csv};
\addlegendentry{inhomogeneous};

\addplot[
line width=.3mm,color=red,
mark=none
]
table[
header=false,x index=0, y index=2, col sep = comma
]
{error_bound_load_empty_probability.csv};
\addlegendentry{homogeneous};	
\end{axis}     
\end{tikzpicture}
\caption{Error bound $|\bar{\mathcal{F}} - \mathcal{F}|$ for various $\lambda/\mu$: $F(n_1,n_2) = \mathbf{1}\left\{(n_1,n_2)=(0,0)\right\}$. }
\label{fig:error_bound_load_empty_probability}
\end{figure}  

As is seen from Figure~\ref{fig:error_bound_load_empty_probability}, the error bound given by inhomogeneous perturbation is better than that returned by homogeneous perturbation. The explanation behind this is that for $\bar{R}$, the inhomogeneous transition rates on the axes approach gradually to those of $R$, while the homogeneous ones remain the same distance away from the rates of $R$ everywhere. By applying the inhomogeneous perturbation, the difference between the rates of $R$ and $\bar{R}$ becomes smaller, hence the error bound is smaller. 

Next, fix that $\lambda = 0.2$, $\mu=0.6$, and let $\mu^* = \eta\cdot\mu$. In Figure~\ref{fig:error_bound_perturbation_empty_probability}, the error bounds with homogeneous and inhomogeneous perturbation for various $\eta$ are given. 
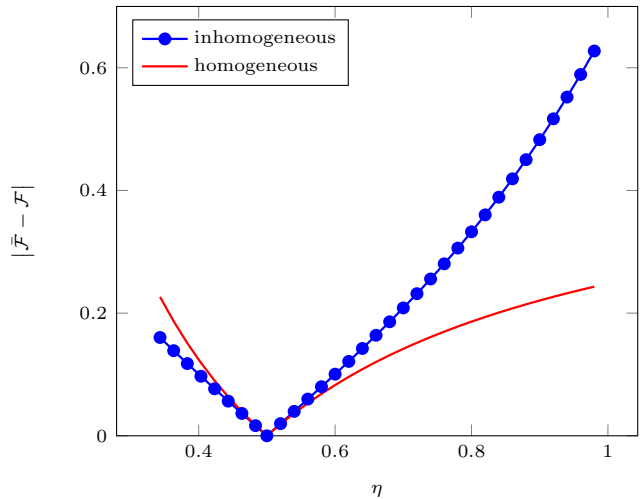
\begin{figure}[!ht]
\centering
\begin{tikzpicture}	
\begin{axis}[
xlabel=$\eta$, ylabel=$\left| \bar{\mathcal{F}}-\mathcal{F} \right|$, 
ymin = 0, ymax = 0.7,
font=\scriptsize,
legend style={
	cells={anchor=west},
	legend pos=north west,
	font=\scriptsize,
}
]
	
\addplot[
mark=*,line width=.3mm,color=blue
]
table[
header=false, x index=0, y index=1, col sep = comma
]
{error_bound_perturbation_empty_probability.csv};
\addlegendentry{inhomogeneous};
	
\addplot[
line width=.3mm,color=red,
mark=none
]
table[
header=false,x index=0, y index=2, col sep = comma
]
{error_bound_perturbation_empty_probability.csv};
\addlegendentry{homogeneous};	
\end{axis}     
\end{tikzpicture}
\caption{Error bound $|\bar{\mathcal{F}} - \mathcal{F}|$ for various $\eta$: $F(n_1,n_2) = \mathbf{1}\left\{(n_1,n_2)=(0,0)\right\}$. }
\label{fig:error_bound_perturbation_empty_probability}
\end{figure}
It can be seen that the inhomogeneous perturbation does not always give better error bound than homogeneous one. When $\eta = 0.5$, $R$ itself has a product-form stationary distribution hence the error bound is $0$. If $\eta < 0.5$, from the previous description, it is known that 
\begin{eqnarray*}
\lim_{n_1\rightarrow \infty} \bar{h}_{-1,0}(n_1,0) = h_{-1,0}, \qquad \lim_{n_2\rightarrow \infty} \bar{v}_{0,-1}(0,n_2) = v_{0,-1}. 
\end{eqnarray*}
Because of the argument before, the inhomogeneous perturbation outperforms the homogeneous one. Moreover, if $\eta$ is small, there is a relatively big difference between transition rates of $R$ and $\bar{R}$ for homogeneous perturbation while the rates of $\bar{R}$ are approaching those of $R$ gradually for inhomogeneous perturbation. Therefore, the difference between homogeneous and inhomogeneous perturbation gets bigger when $\eta$ becomes smaller. However, if $\eta > 0.5$, the limiting rates are further away from the original ones for inhomogeneous perturbation than the homogeneous one. Hence, the error bound for inhomogeneous perturbation is worse than that for the homogeneous one. In the next part, the same model is considered with a different performance measure. 

\subsubsection*{The expected number of jobs in the first queue}

Consider the expected number of jobs in the first queue. In this case the reward function is $F(n_1,n_2) = n_1$. In~\cite{goseling2013energy}, it is shown that the bias terms $D_{i,j}^t(n_1,n_2)$ can be either non-negative or non-positive depending on $(n_1,n_2)$ and $(i,j)$. But there is no explicit expression available for the bounds on the bias terms. Following from the approach in~\cite{goseling2016linear}, we formulate a linear program in which the error bound result given by Theorem~\ref{thm:error_bound_result} is the objective function and the coefficients of $B_1(n_1,0)$, $B_2(0,n_2)$ and $B_3$ are the variables. By minimizing the error bound, bounding functions can be found for the bias terms. 

Again, first fix $\mu^* = 0.3\mu$ and consider various loads. For each case, bounding functions are found by solving the linear program. These functions are then used to get the error bound result. The error bounds obtained by homogeneous and inhomogeneous perturbation are shown in Figure~\ref{fig:error_bound_load_num_first_queue}. 
\begin{figure}[!ht]
\centering
\begin{tikzpicture}[scale=1]
\begin{axis}[
xlabel=$\lambda/\mu $,ylabel=$| \bar{\mathcal{F}} - \mathcal{F} |$, 
ymin = 0, ymax = 11,
font=\scriptsize,
legend style={
	cells={anchor=west},
	legend pos=north west,
	font=\scriptsize,
}
]

\addplot[
mark=*, mark size=0.5mm,line width=.3mm,color=blue
]
table[
header=false, x index=0, y index=1, col sep = comma
]
{error_bound_load_num_first_queue.csv};
\addlegendentry{inhomogeneous};

\addplot[
line width=.3mm,color=red,
mark=none
]
table[
header=false,x index=0, y index=2, col sep = comma
]
{error_bound_load_num_first_queue.csv};
\addlegendentry{homogeneous};	
\end{axis}     
\end{tikzpicture}
\caption{Error bound $|\bar{\mathcal{F}} - \mathcal{F}|$ with homogeneous and inhomogeneous perturbation for various $\lambda/\mu$: $F(n_1,n_2) = n_1$. }
\label{fig:error_bound_load_num_first_queue}
\end{figure}
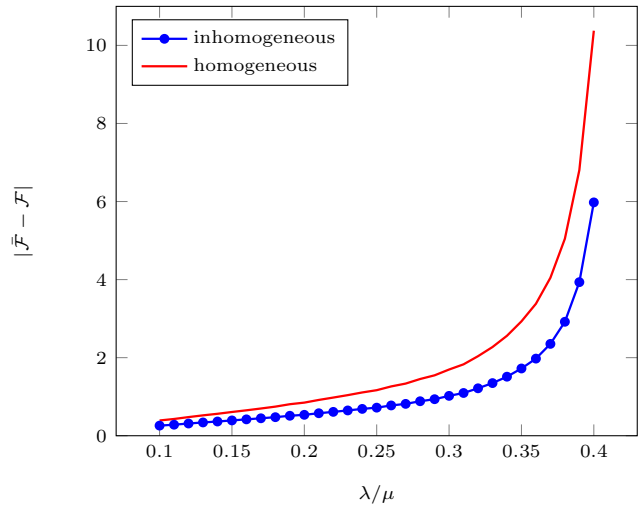
It can be observed that the inhomogeneous perturbation performs better than the homogeneous one, since the inhomogeneous rates are approaching those of $R$ in the limit. When $\lambda/\mu > 0.4$, similar to the observation in~\cite{goseling2016linear}, the linear programming problem does not have a solution satisfying all the constraints. Hence, the corresponding part is not included in the figure. 

Next, let $\lambda = 0.2$, $\mu = 0.6$, and take $\mu^* = \eta \cdot \mu$. The error bound results for homogeneous and inhomogeneous perturbation are also given below. It is seen that, due to the same reason, the inhomogeneous perturbation is better than the homogeneous one when $\eta < 0.5$. If $\eta > 0.5$, the error bound for homogeneous perturbation is smaller. 

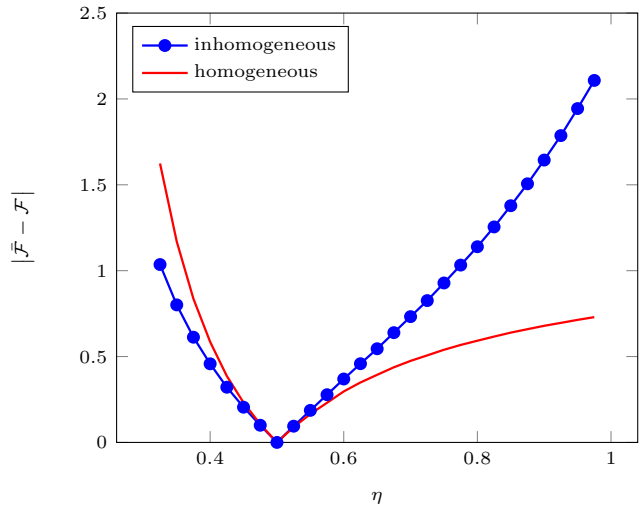
\begin{figure}[!ht]
\centering
\begin{tikzpicture}	
\begin{axis}[
xlabel=$\eta$, ylabel=$\left| \bar{\mathcal{F}}-\mathcal{F} \right|$, 
ymin = 0, ymax = 2.5,
font=\scriptsize,
legend style={
	cells={anchor=west},
	legend pos=north west,
	font=\scriptsize,
}
]

\addplot[
mark=*,line width=.3mm,color=blue
]
table[
header=false, x index=0, y index=1, col sep = comma
]
{error_bound_perturbation_num_first_queue.csv};
\addlegendentry{inhomogeneous};

\addplot[
line width=.3mm,color=red,
mark=none
]
table[
header=false,x index=0, y index=2, col sep = comma
]
{error_bound_perturbation_num_first_queue.csv};
\addlegendentry{homogeneous};	
\end{axis}     
\end{tikzpicture}
\caption{Error bound $|\bar{\mathcal{F}} - \mathcal{F}|$ with homogeneous and inhomogeneous perturbation for various $\eta$: $F(n_1,n_2) = n_1$. }
\end{figure}

\section{Discussion}
\label{sec:discussion}

In this paper, an inhomogeneous perturbation framework is considered for the expected stationary reward of a continuous-time random walk. For a given probability distribution that is a sum of geometric terms, an approach to construct the inhomogeneous transition rates on the boundaries of the state space is proposed for the perturbed random walk. Indeed, with the construction, the given distribution is the stationary probability distribution of the perturbed random walk. Moreover, an explicit expression for the error bound is given based on the proposed inhomogeneous perturbation. Numerical results demonstrate that inhomogeneous perturbation can provide smaller error bounds than homogeneous perturbation. As is seen from the numerical results, the quality of the error bound depends on the choice of the parameters used in $\bar\pi$. In fact, in some cases the best choice seems to be to use a single geometric term, i.e. product-form distribution, and perform a homogeneous perturbation. Part of future work is to optimize over the choice of $(\rho_k.\sigma_k)$ as well as $c_k$ in the given probability distribution.

\subsubsection*{Acknowledgments}
Xinwei Bai acknowledges support by a CSC scholarship [No. 201407720012].

\bibliographystyle{plain}
\bibliography{references}

\begin{thebibliography}{10}

\bibitem{adan1993compensation}
I.~J. B.~F. Adan, J.~Wessels, and W.~H.~M. Zijm.
\newblock A compensation approach for two-dimensional markov processes.
\newblock {\em Advances in Applied Probability}, pages 783--817, 1993.

\bibitem{altman2004perturbation}
E.~Altman, K.~E. Avrachenkov, and R.~N{\'u}{\~n}ez-Queija.
\newblock Perturbation analysis for denumerable markov chains with application
  to queueing models.
\newblock {\em Advances in Applied Probability}, 36(03):839--853, 2004.

\bibitem{chen2013necessary}
Y.~Chen, R.~J. Boucherie, and J.~Goseling.
\newblock Necessary conditions for the invariant measure of a random walk to be
  a sum of geometric terms.
\newblock {\em arXiv preprint arXiv:1304.3316}, 2013.

\bibitem{chen2015invariant}
Y.~Chen, R.~J. Boucherie, and J.~Goseling.
\newblock Invariant measures and error bounds for random walks in the
  quarter-plane based on sums of geometric terms.
\newblock {\em arXiv preprint arXiv:1502.07218}, 2015.

\bibitem{fayolle1999random}
G.~Fayolle, R.~Iasnogorodski, and V.~A. Malyshev.
\newblock {\em Random walks in the quarter-plane: algebraic methods, boundary
  value problems and applications}, volume~40.
\newblock Springer Science \& Business Media, 1999.

\bibitem{goseling2013energy}
J.~Goseling, R.~J. Boucherie, and J.~C.~W. van Ommeren.
\newblock Energy--delay tradeoff in a two-way relay with network coding.
\newblock {\em Performance Evaluation}, 70(11):981--994, 2013.

\bibitem{goseling2016linear}
Jasper Goseling, Richard~J Boucherie, and Jan-Kees van Ommeren.
\newblock A linear programming approach to error bounds for random walks in the
  quarter-plane.
\newblock {\em Kybernetika}, 52(5):757--784, 2016.

\bibitem{haviv1984perturbation}
M.~Haviv and L.~van~der Heyden.
\newblock Perturbation bounds for the stationary probabilities of a finite
  markov chain.
\newblock {\em Advances in Applied Probability}, 16(04):804--818, 1984.

\bibitem{heidergott2010series}
B.~Heidergott, A.~Hordijk, and N.~Leder.
\newblock Series expansions for continuous-time markov processes.
\newblock {\em Operations Research}, 58(3):756--767, 2010.

\bibitem{heidergott2007series}
B.~Heidergott, A.~Hordijk, and M.~Van~Uitert.
\newblock Series expansions for finite-state markov chains.
\newblock {\em Probability in the Engineering and Informational Sciences},
  21(03):381--400, 2007.

\bibitem{miyazawa2011light}
M.~Miyazawa.
\newblock Light tail asymptotics in multidimensional reflecting processes for
  queueing networks.
\newblock {\em Top}, 19(2):233--299, 2011.

\bibitem{vandijk1988simple}
N.~M. van Dijk.
\newblock Simple bounds for queueing systems with breakdowns.
\newblock {\em Performance Evaluation}, 8(2):117--128, 1988.

\bibitem{vandijk1998bounds}
N.~M. van Dijk.
\newblock Bounds and error bounds for queueing networks.
\newblock {\em Annals of Operations Research}, 79:295--319, 1998.

\bibitem{vandijk11inbook}
N.~M. Van~Dijk.
\newblock Error bounds and comparison results: The markov reward approach for
  queueing networks.
\newblock In R.~J. Boucherie and N.~M. Van~Dijk, editors, {\em Queueing
  Networks: A Fundamental Approach}, volume 154 of {\em International Series in
  Operations Research \& Management Science}. Springer, 2011.

\bibitem{vandijk1988perturbation}
N.~M. van Dijk and M.~L. Puterman.
\newblock Perturbation theory for markov reward processes with applications to
  queueing systems.
\newblock {\em Advances in applied probability}, 20(01):79--98, 1988.

\end{thebibliography}

\end{document}